\documentclass[reqno, 12pt]{amsart}


\usepackage{caption}
\usepackage{array}
\usepackage{amsmath}
\usepackage{amsfonts}
\usepackage{amssymb}
\usepackage{enumerate}
\usepackage{epsfig}
\usepackage{graphicx}
\usepackage{amsthm}
\usepackage{amsmath, amscd}
\usepackage{xy}
\xyoption{all}
\usepackage{color}
\usepackage{marvosym}
\usepackage{bm}
\usepackage{amsbsy}
\usepackage{hyperref}
\usepackage{tikz}
\usetikzlibrary{matrix,arrows,backgrounds}

\oddsidemargin  0.0in
\evensidemargin 0.0in
\textwidth      6.5in
\headheight     0.0in
\topmargin      0.0in
\textheight=9.0in

\setlength{\marginparwidth}{.85in}

\newtheorem{theorem}{Theorem}[subsection]
\newtheorem{theorem-section}{Theorem}[section]

\newtheorem{lemma-section}[theorem-section]{Lemma}
\newtheorem{proposition-section}[theorem-section]{Proposition}
\newtheorem{corollary}[theorem]{Corollary}
\newtheorem{corollary-section}[theorem-section]{Corollary}

\newtheorem{conjecture}[theorem]{Conjecture}

\theoremstyle{definition}
\newtheorem{definition}[theorem]{Definition}
\newtheorem{remark}[theorem]{Remark}
\newtheorem{remark-section}[theorem-section]{Remark}

\newcommand{\op}[1]{\operatorname{#1}}

\newcommand{\dcoh}[1]{\operatorname{D}(\operatorname{coh }#1)}
\newcommand{\dbcoh}[1]{\operatorname{D}^{\operatorname{b}}(\operatorname{coh }#1)}

\def\N{\op{\mathbb{N}}}
\def\Z{\op{\mathbb{Z}}}
\def\C{\op{\mathbb{C}}}

\def\O{\op{\mathcal{O}}}
\def\A{\op{\mathcal{A}}}
\def\P{\op{\mathbb{P}}}

\def\ra{\rightarrow}

\def\A{\mathop{\mathcal{A}}}


\def\AA{\op{\mathbb{A}}}
\def\GG{\op{\mathbb{G}}}

\def\cX{\mathcal{X}}
\def\cL{\mathcal{L}}

\def\cE{\mathcal{E}}
\def\cA{\mathcal{A}}
\def\cB{\mathcal{B}}
\def\cG{\mathcal{G}}
\def\cO{\mathcal{O}}
\def\cM{\mathcal{M}}

\def\cR{\mathcal{R}}

\def\cV{\mathcal{V}}
\def\cW{\mathcal{W}}

\def\cU{\mathcal{U}}
\def\cF{\mathcal{F}}
\def\cP{\mathcal{P}}

\def\cH{\mathcal{H}}

\def\gm{\mathbb{G}_m}

\def\iso{\cong}
\def\cbullet{\raisebox{0.02cm}{\scalebox{0.7}{$\bullet$}}}
\renewcommand{\t}[1]{\text{#1}}
\setlength{\parskip}{0.15in}

\newcommand{\newterm}{\textsf}

\newcommand{\dpemodz}[1]{\op{D}_{\op{pe}}(\op{Mod}_{\Z}{#1})}
\newcommand{\dmodgraded}[2]{\op{D}(\op{mod}_{#1}{#2})}

\newcommand{\dpemodinftyZ}[1]{\op{D}_{\op{pe}}(\op{Mod}_{\infty,\Z}{#1})}

\title[On the Derived Categories of Degree $d$ Hypersurface Fibrations]{On the Derived Categories of Degree $d$ Hypersurface Fibrations}
\author[Ballard]{Matthew Ballard}
\address{
  \begin{tabular}{l}
   Matthew Ballard  \\ 
   \hspace{.1in} Universit\"at Wien, Fakult\"at f\"ur Mathematik,  Wien, \"Osterreich \\
   \hspace{.1in} Email: {\bf ballard@math.wisc.edu} \\
  \end{tabular}
}

\author[Deliu]{Dragos Deliu}
\address{
  \begin{tabular}{l}
   Dragos Deliu  \\ 
   \hspace{.1in} Universit\"at Wien, Fakult\"at f\"ur Mathematik,  Wien, \"Osterreich \\
   \hspace{.1in} Email: {\bf dragos.deliu@univie.ac.at} \\
  \end{tabular}
}

\author[Favero]{David Favero}
\address{
  \begin{tabular}{l}
   David Favero \\
   \hspace{.1in} Universit\"at Wien, Fakult\"at f\"ur Mathematik,  Wien, \"Osterreich \\
   \hspace{.1in} Email: {\bf favero@gmail.com} \\
  \end{tabular}
}

\author[Isik]{M. Umut Isik}
\address{
  \begin{tabular}{l}
   M. Umut Isik \\ 
   \hspace{.1in} Universit\"at Wien, Fakult\"at f\"ur Mathematik,  Wien, \"Osterreich \\
   \hspace{.1in} Email: {\bf mehmet.umut.isik@univie.ac.at} \\
  \end{tabular}
}

\author[Katzarkov]{Ludmil Katzarkov}
\address{
  \begin{tabular}{l}
   Ludmil Katzarkov \\
   \hspace{.1in} Universit\"at Wien, Fakult\"at f\"ur Mathematik,  Wien, \"Osterreich \\
   \hspace{.1in} Email: {\bf lkatzark@math.uci.edu} \\
  \end{tabular}
}

\numberwithin{equation}{section}

\begin{document}
\begin{abstract}
We provide descriptions of the derived categories of degree $d$ hypersurface fibrations which generalize a result of Kuznetsov for quadric fibrations and give a relative version of a well-known theorem of Orlov. Using a local generator and Morita theory, we re-interpret the resulting matrix factorization category as a derived-equivalent sheaf of dg-algebras on the base. Then, applying homological perturbation methods, we obtain a sheaf of $A_\infty$-algebras which gives a new description of homological projective duals for (relative) $d$-Veronese embeddings, recovering the sheaf of Clifford algebras obtained by Kuznetsov in the case when $d=2$. 
\end{abstract}
\maketitle

\section{Introduction} \label{sec: intro}

The purpose of this paper is to construct semi-orthogonal decompositions on derived categories of coherent sheaves of families of degree $d$ hypersurfaces and complete intersections, and to describe the semi-orthogonal pieces. The main inspirations for our results are two of the most appreciated theorems on derived categories: Orlov's theorem on the derived categories of hypersurfaces (\cite[Theorem 2.13]{Orl09}) and complete intersections (\cite[Proposition 2.16]{Orl09}), and Kuznetsov's theorem on quadric fibrations (\cite[Theorem 4.2]{Kuz05}).  

The first part of Orlov's theorem states that, given a hypersurface $X$ in $\P^{N-1}$ given as the zero-scheme of a polynomial $w$ of degree $d<N$, there is a semi-orthogonal decomposition 
       \begin{displaymath}
 	\dbcoh{X}  = \langle \O_Y(d-N+1), \ldots, \O_Y, \dcoh{[\AA^{N}/\mathbb{G}_m], w} \rangle,
       \end{displaymath}
where $\dcoh{[\AA^{N}/\mathbb{G}_m], w}$ denotes the $\C^\times$-equivariant (or $\Z$-graded) matrix factorization category of the Landau-Ginzburg pair $(\AA^N, w)$.

Kuznetsov's theorem on quadric fibrations states that given a quadric fibration $X$ defined as the zero-scheme of a section $s \in \Gamma(S, \op{Sym}^2 \mathcal E \otimes \mathcal L)$ in a projective bundle $q: \P(\mathcal E) \to S$, there is a semi-orthogonal decomposition
  \begin{gather*}
    \dbcoh{X} = \langle q^*\dbcoh{S}, \ldots, q^*\dbcoh{S} \otimes_{\mathcal O_X} \mathcal O_X(\op{rank} \mathcal E - 3), \dcoh{(S,\cB_0)} \rangle,
  \end{gather*}
where $\cB_0$ is a sheaf of even parts of Clifford algebras the Clifford relations of which are defined by second partial derivatives of $s$.

Kuznetsov's result can be interpreted as a relative version of Orlov's by replacing Orlov's matrix factorization category with a sheaf of non-commutative algebras on the base. This paper generalizes these theorems to families of hypersurfaces and complete intersections of general degree and connects these two theorems by showing that the sheaf of algebras $\cB_0$ is obtained from the matrix factorization category by homological perturbation methods applied to the endomorphism sheaf of dg-algebras of a local generator. 

Specifically, we prove in Corollary \ref{proposition: VGIT for degree d fibrations} that, given a family $X\subset \P(\cE)$ of hypersurfaces defined as the zero-scheme of a section $s \in \Gamma(S, \op{Sym}^d \mathcal E \otimes \mathcal L)$, where $d < \op{rank}\cE$, there is a semi-orthogonal decomposition 
  \begin{gather*}
    \dbcoh{X} = \langle q^*\dbcoh{S}, \ldots, q^*\dbcoh{S} \otimes_{\mathcal O_X} \mathcal O_X(\op{rank}\cE - d- 1), \dcoh{[\op{V}_S(\mathcal{E})/\mathbb{G}_m],w} \rangle,
  \end{gather*}
  where $w$ is contraction by $s$. Proposition \ref{proposition: Orlov rel CI} and Corollary \ref{proposition: VGIT for degree d fibrations} prove the analogous results for families of complete intersections as well as the cases when $d\geq N+1$. The technique for proving this theorem is to first pass from a space to the total space of a line bundle using the $\sigma$-model-Landau-Ginzburg-model correspondence of \cite{Isik, Shipman}, and then to vary Geometric Invariant Theory quotients (VGIT) i.e. birationally transform the total space of this line bundle \cite{BFK12, HL12, KawFF,VdB, Seg2, HW, DSe}.

We then study the category $\dcoh{[\op{V}_S(\mathcal{E})/\mathbb{G}_m],w}$ in more detail. Performing relative versions of the calculations in \cite{Se, Dyc11, Ef}, we prove that there is a local generator for the derived category of this gauged Landau-Ginzburg model and calculate its derived endomorphism sheaf of dg-algebras $\cB$ over the base $S$. We then use a version of the homological perturbation lemma to show that $\cB$ is quasi-isomorphic to a sheaf $\cA$ of minimal graded $A_\infty$-algebras. When $d=2$, $\cB$ and $\cA$ are derived-equivalent, which connects Orlov's and Kuznetsov's theorems as $\cA$ coincides with Kuznetsov's $\cB_0$. When $d>2$, the derived equivalence is implied by a technical conjecture (Conjecture \ref{conditionstar}).

Our main application is to Homological Projective Duality (HPD) \cite{KuzHPD} for the Veronese embeddings $\P(W) \rightarrow \P(S^dW)$ for $d\leq \op{dim}W$, which provides a powerful method of relating the derived categories of families of degree $d$ hypersurfaces and of intersections of degree $d$ hypersurfaces. In \cite{BDFIKhpd}, using new general methods for HPD for GIT quotients, it was proven that a Landau-Ginzburg pair $([\op{V}_S(\mathcal P)\times_S\P_S(S^d\mathcal P^*)/{\gm}],w)$ is a homological projective dual to the $d$-Veronese embedding of $\P(W)$. Combining this with the results described in the previous paragraph, we give a new description for the homological projective dual. More concretely, this Landau-Ginzburg pair is derived-equivalent to the pair $(\P(S^d W^*),\cA)$,
where $\cA$ is a $\Z$-graded sheaf of minimal $A_\infty$-algebras given by
 $$\mathcal A=\left(\bigoplus_{k\in \Z} u^k\cO_{\P(S^d W^*)}(k) \right)\otimes\Lambda^{\bullet} W^*,$$
 and higher products defined by explicit tree formulas, notably with 
 $$\mu^d(1\otimes v_{i_1},\ldots,1\otimes v_{i_d})=\frac{u}{d!}\frac{\partial^d w}{\partial x_{i_1}\ldots\partial x_{i_d}},$$
 where $\left\{ x_j \right\}$ denotes a basis of $W$ and $\left\{ v_j \right\}$ the corresponding basis of $\Lambda^{1} W^*$,
and $\mu^i = 0$ for $2<i<d$.
When $d>2$, this requires the assumption of the technical Conjecture \ref{conditionstar} mentioned above. When $d=2$, we recover the homological projective dual from \cite{Kuz05}.

\subsection*{Acknowledgments} We would like to thank Alexander Polishchuk for directing us toward \cite{Se} and for other helpful comments. We also thank David Ben-Zvi, Colin Diemer, Alexander Efimov, Kentaro Hori, Maxim Kontsevich, Alexander Kuznetsov, Yanki Lekili, Dmitri Orlov,  Pranav Pandit, Tony Pantev, and Anatoly Preygel for useful discussions. The authors were funded by NSF DMS 0854977 FRG, NSF DMS 0600800, NSF DMS 0652633 FRG, NSF DMS 0854977, NSF DMS 0901330, FWF P 24572 N25, by FWF P20778 and by an ERC Grant. The first author was funded, in addition, by NSF DMS 0838210 RTG.

\section{Background}\label{sec: background}
We begin by recalling some definitions and by reviewing part of the relationship between variations of GIT quotients \cite{Tha96, DH98} and derived categories, following \cite{BFK12} and \cite{BDFIKhpd}. We will use the same notation as in the background section of \cite{BDFIKhpd}.

Let $Q$ be a smooth and quasi-projective variety with the action of an affine algebraic group, $G$. Let $\cL$ be an invertible $G$-equivariant sheaf on $Q$ and let $w \in \op{H}^0(Q,\mathcal L)^G$ be a $G$-invariant section of $\cL$. 

\begin{definition}\label{deflgmodel}
 A \newterm{gauged Landau-Ginzburg model}, or \newterm{gauged LG model}, is the quadruple, $(Q, G, \cL, w)$, with $Q$, $G$, $\cL$, and $w$ as above. We shall commonly denote a gauged LG model by the pair $([Q/G],w)$. 
\end{definition}

Given a quasi-coherent $G$-equivariant sheaf, $\mathcal E$, we denote $\mathcal E \otimes \cL^n$ by $\mathcal E(n)$ and given a morphism, $f: \mathcal E \to \mathcal F$, we denote $f \otimes \op{Id}_{\cL^n}$ by $f(n)$. Following Eisenbud, \cite{EisMF}, we have the following definition.

\begin{definition}
 A \newterm{coherent factorization}, or simply a \newterm{factorization}, of a gauged LG model, $([Q/G],w)$, consists of a pair of coherent $G$-equivariant sheaves, $\mathcal E^{-1}$ and $\mathcal E^0$, and a pair of $G$-equivariant $\mathcal O_Q$-module homomorphisms,
\begin{align*}
 \phi^{-1}_{\mathcal E} &: \mathcal E^{0}(-1) \to \mathcal E^{-1} \\
 \phi^0_{\mathcal E} &: \mathcal E^{-1} \to \mathcal E^0
\end{align*}
such that the compositions, $\phi^0_{\mathcal E} \circ \phi^{-1}_{\mathcal E} : \mathcal E_{0}(-1) \to \mathcal E^0$ and  $\phi^{-1}_{\mathcal E}(1) \circ \phi_{\mathcal E}^0: \mathcal E^{-1} \to \mathcal E^{-1}(1)$, are multiplication by $w$.  We shall often simply denote the factorization $(\mathcal E^{-1}, \mathcal E^0, \phi_{\mathcal E}^{-1}, \phi_{\mathcal E}^0)$ by $\mathcal E$. The coherent $G$-equivariant sheaves, $\mathcal E^0$ and $\mathcal E^{-1}$, are called the \newterm{components of the factorization}, $\mathcal E$.

A \newterm{morphism of factorizations}, $g: \mathcal E \to \mathcal F$, is a pair of morphisms $(g^{-1}, g^{0})$ that commute with $\phi^{i}_{\cE}$ and $\phi^{i}_{\cF}$. Let $\op{coh}([Q/G],w)$ be the Abelian category of factorizations with coherent components.  

\end{definition}
 There is also a notion of a chain homotopy between morphisms in $\op{coh}([Q/G],w)$ and we let $K(\op{coh}[Q/G],w)$ be the corresponding homotopy category (c.f \cite{BDFIKhpd,BDFIK}), the category whose objects are factorizations and whose morphisms are homotopy classes of morphisms. 
In this category, one defines a natural translation autoequivalence and a natural cone construction. These induce the structure of a triangulated category on the homotopy category, $K(\op{Qcoh}[Q/G],w)$ \cite{Pos2, BDFIK}.

In order to derive $\op{coh}([Q/G],w)$, one takes a Verdier quotient by the correct substitute, in $\op{coh}([Q/G],w)$, for acyclic complexes. This was defined independently in \cite{Pos1}, \cite{OrlMF}.

\begin{definition}
 A factorization, $\mathcal A$, is called \newterm{totally acyclic} if it lies in the smallest thick subcategory of $K(\op{coh}[Q/G],w)$ containing all totalizations \cite[Definition 2.10]{BDFIK} of short exact sequences 
from $\op{coh}([Q/G],w)$. We let $\op{acycl}([Q/G],w)$ denote the thick subcategory of $K(\op{coh}[Q/G],w)$ consisting of totally acyclic factorizations.  
 
 The \newterm{absolute derived category of factorizations}, or the \newterm{derived category}, of the LG model $([Q/G],w)$, is the Verdier quotient,
\begin{displaymath}
 \op{D}(\op{coh}[Q/G],w) := K(\op{coh}[Q/G],w)/\op{acycl}([Q/G],w).
\end{displaymath}
 
 Abusing terminology, we say that $\mathcal E$ and $\mathcal F$ are \newterm{quasi-isomorphic} factorizations if they are isomorphic in the absolute derived category. 
\end{definition}

We now discuss the relationship between derived categories of factorizations and elementary wall crossings. 
Let $G$ be an algebraic group acting on a smooth algebraic variety $Q$, and let $\lambda: \mathbb{G}_m \to G$ be a one-parameter subgroup. We let $Z_{\lambda}^0$ be a choice of a connected component of the fixed locus of $\lambda$ on $Q$. Set
\begin{displaymath}
 Z_{\lambda} := \{ q \in Q \mid \lim_{t \to 0} \sigma(\lambda(t),q) \in Z_{\lambda}^0\}.
\end{displaymath}
The subvariety $Z_{\lambda}$ is called the \newterm{contracting locus} associated to $\lambda$ and $Z_{\lambda}^0$. If $G$ is Abelian, $Z_{\lambda}^0$ and $Z_{\lambda}$ are both $G$-invariant subvarieties. Otherwise, we must consider the orbits
\[
S_{\lambda} := G \cdot Z_\lambda ,
S_{\lambda}^0 := G \cdot Z_\lambda^0.
\]
Also, let
\begin{displaymath}
 Q_{\lambda} := Q \setminus S_{\lambda}.
\end{displaymath}

We will be interested in the case where $S_{\lambda}$ is a smooth closed subvariety satisfying a certain condition.  To state this condition we need the following group attached to any one-parameter subgroup
\begin{displaymath}
 P(\lambda) := \{ g \in G \mid \lim_{\alpha \to 0} \lambda(\alpha) g \lambda(\alpha)^{-1} \text{ exists}\}.
\end{displaymath}

\begin{definition} \label{definition: HKKN strat}
 Assume $Q$ is a smooth variety with a $G$-action. An \newterm{elementary HKKN stratification} of $Q$ is a disjoint union
\[
\mathfrak{K}: Q = Q_{\lambda} \sqcup S_{\lambda},
\]
obtained from the choice of a one-parameter subgroup $\lambda: \mathbb{G}_m \to G$, together with the choice of a connected component, denoted $Z_{\lambda}^0$, of the fixed locus of $\lambda$ such that
 \begin{itemize}
   \item $S_{\lambda}$ is closed in $X$.
 \item The morphism,
\begin{align*}
 \tau_{\lambda}: [(G \times Z_{\lambda})/P(\lambda)] & \to S_{\lambda} \\
 (g,z) & \mapsto g \cdot z
\end{align*}
is an isomorphism where $p \in P(\lambda)$ acts by
\begin{displaymath}
 (p,(g,z)) \mapsto (gp^{-1},p \cdot z).
\end{displaymath}
 \end{itemize}
\end{definition}

We will need to attach an integer to an elementary HKKN stratification. We restrict the relative canonical bundle $\omega_{S_{\lambda}|Q}$ to any fixed point $q \in Z_{\lambda}^0$.  This yields a one-dimensional vector space which is equivariant with respect to the action of $\lambda$.  

\begin{definition}
 The \newterm{weight of the stratum} $S_{\lambda}$ is the $\lambda$-weight of $\omega_{S_{\lambda}/ Q}|_{Z_{\lambda}^0}$. It is denoted by $t(\mathfrak{K})$.
\end{definition}

Furthermore, given a one parameter subgroup $\lambda$ we may also consider its composition with inversion
\[
-\lambda(t) := \lambda(t^{-1}) = \lambda(t)^{-1},
\]
and ask whether this provides an HKKN stratification as well. This leads to the following definition.

\begin{definition}
 An \newterm{elementary wall-crossing}, $(\mathfrak{K}^+,\mathfrak{K}^-)$, is a pair of elementary HKKN stratifications,
 \begin{align*}
 Q = & Q_{\lambda} \sqcup S_{\lambda},\\
 Q = & Q_{-\lambda} \sqcup S_{-\lambda},
 \end{align*}
 such that $Z^0_\lambda = Z^0_{-\lambda}$. We often let $Q_+ := Q_{\lambda}$ and $Q_{-} := Q_{-\lambda}$.
\end{definition}

Let $C(\lambda)$ denote the centralizer of the $1$-parameter subgroup $\lambda$. For an elementary wall-crossing set
\[
\mu = - t(\mathfrak{K}^+) + t(\mathfrak{K}^-).
\]

\begin{theorem-section} \label{thm: bfkvgitlg}
 Let $Q$ be a smooth, quasi-projective variety equipped with the action of a reductive linear algebraic group, $G$. Let $w \in \op{H}^0(Q,\mathcal L)^G$ be a $G$-invariant section of a $G$-invertible sheaf, $\mathcal L$. Suppose we have an elementary wall-crossing, $(\mathfrak{K}^+,\mathfrak{K}^-)$, 
 \begin{align*}
  Q & = Q_+ \sqcup S_{\lambda} \\
  Q & = Q_- \sqcup S_{-\lambda},
 \end{align*}
and assume that $\mathcal L$ has weight zero on $Z_{\lambda}^0$ and that $S^0_{\lambda}$ admits a $G$ invariant affine open cover. Fix any $D\in \Z$. 
 
 \begin{enumerate}
  \item If $\mu > 0$, then there are fully-faithful functors,
  \begin{displaymath}
   \Phi^+_D: \dcoh{[Q_-/G],w|_{Q_-}} \to \dcoh{[Q_+/G],w|_{Q_+}},
  \end{displaymath}
  and, for $-t(\mathfrak{K}^-) + D \leq j \leq -t(\mathfrak{K}^+) + D -1$,
  \begin{displaymath}
   \Upsilon_j^+: \dcoh{[Z_{\lambda}^0/C(\lambda)],w_{\lambda}}_j \to \dcoh{[Q_+/G],w|_{Q_+}},
  \end{displaymath}
  and a semi-orthogonal decomposition,
  \begin{displaymath}
   \dcoh{[Q_+/G],w|_{Q_+}} = \langle  \Upsilon^+_{-t(\mathfrak{K}^-)+D}, \ldots, \Upsilon^+_{-t(\mathfrak{K}^+)+D-1}, \Phi^+_D  \rangle.
  \end{displaymath}
  \item If $\mu = 0$, then there is an exact equivalence,
  \begin{displaymath}
   \Phi^+_D: \dcoh{[Q_-/G],w|_{Q_-}} \to \dcoh{[Q_+/G],w|_{Q_+}}.
  \end{displaymath}
  \item If $\mu < 0$, then there are fully-faithful functors,
  \begin{displaymath}
   \Phi^-_D: \dcoh{[Q_+/G],w|_{Q_+}} \to \dcoh{[Q_-/G],w|_{Q_-}},
  \end{displaymath}
  and, for $-t(\mathfrak{K}^+) + D \leq j \leq -t(\mathfrak{K}^-) + D -1$,
  \begin{displaymath}
   \Upsilon_j^-: \dcoh{[Z_{\lambda}^0/C(\lambda)],w_{\lambda}}_j \to \dcoh{[Q_-/G],w|_{Q_-}},
  \end{displaymath}
  and a semi-orthogonal decomposition,
  \begin{displaymath}
   \dcoh{[Q_-/G],w|_{Q_-}} = \langle  \Upsilon^-_{-t(\mathfrak{K}^+)+D}, \ldots, \Upsilon^-_{-t(\mathfrak{K}^-)+D-1}, \Phi^-_D \rangle.
  \end{displaymath}
 \end{enumerate}
\end{theorem-section}

\begin{proof}
 This is \cite[Theorem 3.5.2]{BFK12}.
\end{proof}

The categories, $\dcoh{[Z_{\lambda}^0/C(\lambda)],w_{\lambda}}_j$, appearing in Theorem \ref{thm: bfkvgitlg} are the full subcategories consisting of objects of $\lambda$-weight $j$ in $\dcoh{[Z_{\lambda}^0/C(\lambda)],w_{\lambda}}$. For more details, we refer the reader to \cite{BFK12}. In our situation, we will only need the conclusion of the following lemma. We set 
\begin{displaymath}
 Y_{\lambda} : = [ Z_{\lambda}^0 / (C(\lambda)/\lambda) ].
\end{displaymath}

\begin{lemma-section} \label{lemma: wall compositions are same}
 We have an equivalence,
 \begin{displaymath}
  \dcoh{Y_{\lambda},w_{\lambda}} \cong \dcoh{[Z_{\lambda}^0/C(\lambda)],w_{\lambda}}_0.
 \end{displaymath}

 Further, assume that there there is a character, $\chi: C(\lambda) \to \mathbb{G}_m$, such that
 \begin{displaymath}
  \chi \circ \lambda(t) = t^l.
 \end{displaymath}
 Then, twisting by $\chi$ provides an equivalence,
 \begin{displaymath}
  \dcoh{[Z^0_{\lambda}/C(\lambda)],w_{\lambda}}_r  \cong \dcoh{[Z^0_{\lambda}/C(\lambda)],w_{\lambda}}_{r+l},
 \end{displaymath}
 for any $r \in \Z$.
\end{lemma-section}

\begin{proof}
 This is Lemma 3.4.4 of \cite{BFK12}; we give the very simple and short proof here. A quasi-coherent sheaf on $Y_{\lambda}$ is a quasi-coherent $C(\lambda)$-equivariant sheaf on $Z_{\lambda}^0$ for which $\lambda$ acts trivially, i.e. of $\lambda$-weight zero.  For the latter statement just observe that twisting with $\chi$ is an autoequivalence of $\dbcoh{[Z_{\lambda}^0/C(\lambda)]}$ which brings range to target and its inverse does the reverse.
\end{proof}

\section{Relative version of Orlov's theorem} \label{sec: deg d fib via VGIT}

\begin{theorem-section}\label{thm: orlov}
  Let $X$ be a hypersurface of degree $d$ given as the zero-scheme of $w \in \Gamma(\P(V),\O_{\P(V)}(d))$. Let $N=\op{dim}V$.
 \begin{enumerate}
  \item If $d<N$, then there is a semi-orthogonal decomposition
       \begin{displaymath}
 	\dbcoh{X}  = \langle \O_Y(d-N+1), \ldots, \O_Y, \dcoh{[\AA^{N}/\mathbb{G}_m], w} \rangle.
       \end{displaymath}
  \item If $d=N$, there is an equivalence 
       \begin{displaymath}
 	\dbcoh{X} \cong \dcoh{[\AA^N/\mathbb{G}_m] , w}  
       \end{displaymath}
  \item If $d>N$, there is a semi-orthogonal decomposition
       \begin{displaymath}
 	\dcoh{[\AA^N/\mathbb{G}_m], w}  = \langle k(N-d+1),\ldots, k, \dbcoh{X} \rangle.
       \end{displaymath}    
 \end{enumerate}
\end{theorem-section}
 
\begin{proof}
 This is \cite[Theorem 2.13]{Orl09}.
\end{proof}

We will generalize this statement and the complete intersection version of it (\cite[Proposition 2.16]{Orl09}) to families of complete intersections over a base. To this end, let $S$ be a smooth, connected variety, $\mathcal E$ be a locally-free coherent sheaf of rank $n$ on $S$ and let $\mathcal L_i$ be invertible sheaves on $S$, for $1 \leq i \leq c$. Set $\mathcal U = \bigoplus_i \mathcal L_i$.  Let
\begin{displaymath}
 Q := \op{V}_S\left(\mathcal E \oplus \mathcal U \right).
\end{displaymath}
Let $q: \P(\mathcal E) \to S$ be the projection. Choose sections $s_i \in \Gamma(S, S^{d_i} \mathcal E \otimes \mathcal L_i)$, let $\tilde{s}_i$ be the corresponding sections in  $\Gamma(\P(\mathcal E),\mathcal O_{\P(\mathcal E)}(d_i) \otimes q^*\mathcal L_i)$ and let $X$ be the zero locus of $(\tilde{s}_1,\ldots,\tilde{s}_c)$ in $\P(\mathcal E)$. Let $w_i$ be the associated regular functions on $Q$. Let $w = \sum_i w_i$. Consider the $\mathbb{G}_m^2$-action on $Q$ given by 
\begin{align*}
 \sigma: \mathbb{G}_m^2 \times Q & \to Q \\
 (\alpha_1,\alpha_2, (e,\oplus_i p_i, s)) & \mapsto ( \alpha_1^{-1} e,\oplus_i \alpha_1^{d_i}\alpha_2^{-1} p_i,s). 
\end{align*}
The function $w$ becomes invariant with respect to the $\mathbb{G}_m$-action given by the one-parameter subgroup $\lambda(\alpha) = (\alpha,1)$. It is semi-invariant of weight $1$ for the other $\mathbb{G}_m$-action. 
 
The one-parameter subgroup $\lambda$ induces an elementary wall crossing (as in Definition \ref{definition: HKKN strat}). To see this, first observe that the fixed locus, $Z_{\lambda}^0$, is the zero section, $\mathbf{0}_Q$, of $Q$. We have 
\begin{align*}
 S_{\lambda} = Z_{\lambda} & = \mathbf{0}_{\op{V}_S(\cE)}\times_S \op{V}_S(\mathcal U) \\
 S_{-\lambda} = Z_{-\lambda} & = \op{V}_S(\mathcal E)\times_S \mathbf{0}_{\op{V}_S(\cU)}.
\end{align*}
Both are closed. The fact that  $\tau_{\lambda}$ is an isomorphism is trivial when the ambient group is Abelian.
 
 We then have 
 \begin{align*}
   Q_+ & =  \left( \op{V}_S(\mathcal E) \setminus \mathbf{0}_{\op{V}_S(\cE)} \right) \times_S \op{V}_S(\mathcal U) \\
  Q_- & =  \op{V}_S(\mathcal E)\times_S \left( \op{V}_S(\mathcal U) \setminus \mathbf{0}_{\op{V}_S(\mathcal U)} \right) .
 \end{align*}
 And, $\mu = \op{rank} \mathcal E - \sum d_i$. Note that $C(\lambda) = \mathbb{G}_m^2$ so $C(\lambda)/\lambda \cong \mathbb{G}_m$. We have an equivalence
 \begin{displaymath}
  \dcoh{[Z_{\lambda}^0/(C(\lambda)/\lambda)],w_{\lambda}} = \dcoh{[S/\mathbb{G}_m],0} \cong \dbcoh{S}.
 \end{displaymath}
 In this example, we observe using \cite[Section 3.4]{BFK12} that $\Upsilon_l^{\pm}$ is the functor 
 \begin{displaymath}
  i_{\pm}^* \circ (j_\mp)_* \circ  \pi_\mp^* : \dcoh{[Z_{\lambda}^0/C(\lambda)],w_{\lambda}}_l \to \dcoh{[Q_{\pm}/\mathbb{G}_m^2],w}.
 \end{displaymath}
 where
 \begin{align*}
    S \overset{\pi_+}{\leftarrow} \op{V}_S(\mathcal E) \overset{j_+}{\to} Q \overset{i_-}{\leftarrow} Q_-\\
    S \overset{\pi_-}{\leftarrow} \op{V}_S(\mathcal U) \overset{j_-}{\to} Q \overset{i_+}{\leftarrow} Q_+ 
 \end{align*}
 are projections and inclusions. Let $\pi: Q \to \P(\mathcal E)$ be the projection. The pullback, $\pi^* \mathcal O_{\P(\mathcal E)}(1)$ has weight $1$ with respect to $\lambda$. Therefore, we have equivalences between the essential images 
 \begin{align*}
  \op{EssIm} \Upsilon_0^+ \otimes \mathcal \pi^*\mathcal O_{\P(\mathcal E)}(l) & \cong \op{EssIm} \Upsilon_l^+ \\
  \op{EssIm} \Upsilon_0^- \otimes \mathcal \pi^*\mathcal O_{\P(\mathcal E)}(-l) & \cong \op{EssIm} \Upsilon_l^-.
 \end{align*}
 Applying Theorem \ref{thm: bfkvgitlg}, we have the following statements.
 
 \begin{itemize}
  \item If $\mu = \op{rank} \mathcal E - \sum_i d_i > 0$, there is a semi-orthogonal decomposition 
  \begin{gather*}
   \dcoh{[Q_+/\mathbb{G}_m^2],w} \cong \langle \Upsilon_0^+\dcoh{[S/\mathbb{G}_m],0}, \Upsilon_0^+\dcoh{[S/\mathbb{G}_m],0} \otimes \pi^*\mathcal O_{\P(\mathcal E)}(1), \ldots, \\ \Upsilon_0^+\dcoh{[S/\mathbb{G}_m],0} \otimes \pi^*\mathcal O_{\P(\mathcal E)}(\mu-1), \Phi^+ \dcoh{[Q_-/\mathbb{G}_m^2],w} \rangle.
  \end{gather*}
  \item If $\mu = \op{rank} \mathcal E - \sum_i d_i = 0$, there is an equivalence 
  \begin{gather*}
   \dcoh{[Q_+/\mathbb{G}_m^2],w} \cong \dcoh{[Q_-/\mathbb{G}_m^2],w}.
  \end{gather*}
  \item If $\mu = \op{rank} \mathcal E - \sum_i d_i < 0$, there is a semi-orthogonal decomposition 
  \begin{gather*}
   \langle \Upsilon_0^-\dcoh{[S/\mathbb{G}_m],0}, \Upsilon_0^-\dcoh{[S/\mathbb{G}_m],0} \otimes \pi^*\mathcal O_{\P(\mathcal E)}(-1), \ldots, \\ \Upsilon_0^-\dcoh{[S/\mathbb{G}_m],0} \otimes \pi^*\mathcal O_{\P(\mathcal E)}(\mu+1), \Phi^- \dcoh{[Q_+/\mathbb{G}_m^2],w} \rangle \cong \dcoh{[Q_-/\mathbb{G}_m^2],w} .
  \end{gather*}
 \end{itemize}
 
 There is an isomorphism
 \begin{displaymath}
  [Q_+/\lambda] \cong \op{V}_{\P(\mathcal E)} \left( \bigoplus_i \mathcal O_{\P(\mathcal E)}(d_i) \otimes_{\mathcal O_{\P(\mathcal E)}} q^*\mathcal L_i \right) 
 \end{displaymath}
 under which $w$ corresponds to the regular function determined by $\oplus_i \tilde{s}_i$. We now need the following theorem.
 
\begin{theorem-section}  \label{thm: isik}
Let $Y$ be the zero-scheme of a section $s\in \Gamma(X,\cE)$ of a locally-free sheaf of finite rank $\cE$ on a smooth variety $X$. Assume that that $s$ is a regular section, i.e. $\op{dim}Y = \op{dim}X - \op{rank} \cE$. Then, there is an equivalence of triangulated categories
\begin{equation*}
\dbcoh{Y} \iso \dcoh{[\op{V}(\cE)/\mathbb{G}_m], w}
\end{equation*}
where $w$ is the regular function determined by $s$ under the natural isomorphism
\begin{displaymath}
\Gamma(\op{V}(\cE),\mathcal O) \cong \Gamma(X, \op{Sym} \mathcal E)
\end{displaymath}
and $\mathbb{G}_m$ acts by dilation on the fibers.
\end{theorem-section}

\begin{proof}
This is \cite[Theorem 3.6]{Isik} or \cite[Theorem 3.4]{Shipman}.
\end{proof}
Assuming further that $\oplus_i \tilde{s}_i$ is a regular section (which implies that $X$ is of pure codimension $c$) and applying Theorem \ref{thm: isik}, we have an equivalence
 \begin{displaymath}
  \Psi: \dcoh{[Q_+/\mathbb{G}_m^2],w} \cong \dcoh{[\op{V}_{\P(\mathcal E)} \left( \bigoplus_i \mathcal O_{\P(\mathcal E_i)}(d_i) \otimes_{\mathcal O_{\P(\mathcal E)}} q^*\mathcal L_i \right)/\mathbb{G}_m],w} \cong \dbcoh{X}.
 \end{displaymath}
 Under this equivalence, we have an isomorphism of functors
 \begin{displaymath}
  \Psi \circ (\Upsilon_0^+ \otimes \pi^*\mathcal O_{\P(\mathcal E)}(l)) \cong q^* \otimes \mathcal O_X(l) : \dbcoh{S} \to \dbcoh{X}
 \end{displaymath}
 where $q: X \subset \P(\mathcal E) \to S$ also denotes the projection from $X$. Thus, we get the following
 
 \begin{proposition-section} \label{proposition: Orlov rel CI}
  With the assumptions above, we have the following
 \begin{itemize}
  \item If $\mu = \op{rank} \mathcal E - \sum_i d_i > 0$, there is a semi-orthogonal decomposition 
  \begin{gather*}
   \dbcoh{X} \cong \langle q^*\dbcoh{S}, \ldots, q^*\dbcoh{S} \otimes_{\mathcal O_X} \mathcal O_X(\mu - 1), \Phi^+ \dcoh{[Q_-/\mathbb{G}_m^2],w} \rangle.
  \end{gather*}
  \item If $\mu = \op{rank} \mathcal E - \sum_i d_i = 0$, there is an equivalence 
  \begin{gather*}
   \dbcoh{X} \cong \dcoh{[Q_-/\mathbb{G}_m^2],w}.
  \end{gather*}
  \item If $\mu = \op{rank} \mathcal E - \sum_i d_i < 0$, there is a semi-orthogonal decomposition 
  \begin{gather*}
   \langle \Upsilon_0^-\dcoh{[S/\mathbb{G}_m],0}, \Upsilon_0^-\dcoh{[S/\mathbb{G}_m],0} \otimes \pi^*\mathcal O_{\P(\mathcal E)}(-1), \ldots, \\ \Upsilon_0^-\dcoh{[S/\mathbb{G}_m],0} \otimes \pi^*\mathcal O_{\P(\mathcal E)}(\mu+1), \Phi^- \dbcoh{X} \rangle \cong \dcoh{[Q_-/\mathbb{G}_m^2],w} .
  \end{gather*}
 \end{itemize}
  
 \end{proposition-section}

 \begin{proof}
   This follows from Theorem \ref{thm: bfkvgitlg} and Theorem \ref{thm: isik}, as discussed above.
 \end{proof}

In the case when $X$ is a family of degree $d$ hypersurfaces, $\mathcal{U}$ is a invertible sheaf and we can write
\begin{equation*}
  [Q_- / \mathbb{G}_{m}^{2}] = [ \op{V}_S(\mathcal E)\times_S \left( \op{V}_S(\mathcal U) \setminus 0 \right) /\mathbb{G}_m^{2}] \iso [\op{V}_S(\mathcal E)/\mathbb{G}_m].
\end{equation*}
where the action of $\mathbb{G}_m$ is by fiber-wise dilation. 
 In the coming sections, we will restrict our attention only to this case. We record this in the following corollary.   
 
 \begin{corollary-section} \label{proposition: VGIT for degree d fibrations}
  Let $s \in \Gamma(S, \op{Sym}^d \mathcal E \otimes \mathcal L)$ and let $X \subset \P(\mathcal E)$ be the associated degree $d$ hypersurface fibration over $S$, with structure map, $q: \P(\mathcal E) \to S$. 
  \begin{itemize}
  \item If $\mu = \op{rank} \mathcal E - d > 0$, there is a semi-orthogonal decomposition 
  \begin{gather*}
    \dbcoh{X} = \langle q^*\dbcoh{S}, \ldots, q^*\dbcoh{S} \otimes_{\mathcal O_X} \mathcal O_X(\mu - 1), \dcoh{[\op{V}_S(\mathcal{E})/\mathbb{G}_m],w} \rangle.
  \end{gather*}
  \item If $\mu = \op{rank} \mathcal E - d = 0$, there is an equivalence 
  \begin{gather*}
   \dbcoh{X} \cong \dcoh{[\op{V}_S(\mathcal{E})/\mathbb{G}_m],w}.
  \end{gather*}
  \item If $\mu = \op{rank} \mathcal E - d < 0$, there is a semi-orthogonal decomposition 
  \begin{gather*}
   \langle \Upsilon_0^-\dcoh{[S/\mathbb{G}_m],0}, \Upsilon_0^-\dcoh{[S/\mathbb{G}_m],0} \otimes \pi^*\mathcal O_{\P(\mathcal E)}(-1), \ldots, \\ \Upsilon_0^-\dcoh{[S/\mathbb{G}_m],0} \otimes \pi^*\mathcal O_{\P(\mathcal E)}(\mu+1), \dbcoh{X} \rangle \cong \dcoh{[\op{V}_S(\mathcal{E})/\mathbb{G}_m],w} .
  \end{gather*}
 \end{itemize}
 \end{corollary-section}
 
 \begin{proof}
  This is a special case of Proposition~\ref{proposition: Orlov rel CI}.
 \end{proof}

 \section{A local generator and Morita theory}\label{sec:morita}

 In this section, we continue within the setting presented in Section \ref{sec: deg d fib via VGIT} with $c=1$ and show that the gauged LG model $([\op{V}_S(\mathcal{E})/\mathbb{G}_m],w)$ is derived-equivalent to the pair $(S,\mathcal{B}_w)$ where $\mathcal{B}_w$ is an equivariant sheaf of dg-algebras.  However, it will be more convenient to work with the isomorphic gauged LG model $([Q_-/\mathbb{G}_m^2],w)=\left( [\left( \op{V}(\mathcal U) \setminus \mathbf{0}_{\op{V}_{S}(\cU)} \right) \times_S \op{V}(\mathcal E) / \mathbb{G}_m^2],w\right)$. The sheaf $\mathcal{B}_w$ will be the derived equivariant endomorphism sheaf of algebras of a ``local generator'' $\mathcal{G}$ of the derived category of this latter LG model. 

Throughout this section, we will make the further assumption that the subvariety defined by $w=0$ in $\P_S(\cE)$ is smooth.

Let us recall our setup. We work over a base $S$ which is a smooth connected variety and $\mathcal E$ is a locally-free sheaf $S$.  Meanwhile, we have specialized to the case where  $\mathcal U$ is an invertible sheaf on $S$. We have a $\mathbb{G}_m^2$-action on
\begin{displaymath}
  Q = \op{V}_S\left(\mathcal E \right) \times_S \op{V}_S\left(\mathcal U \right),
\end{displaymath}
given by 
\begin{align*}
 \sigma: \mathbb{G}_m^2 \times Q & \to Q \\
 (\alpha_1,\alpha_2, (e,p, s)) & \mapsto (\alpha_1^{-1} e, \alpha_1^{d}\alpha_2^{-1} p ,s). 
\end{align*}
We also have a projection $q: \P(\mathcal E) \to S$ and have denoted by $X$, which is assumed to be smooth, the zero locus in $\P(\mathcal E)$of a section of $\mathcal O_{\P(\mathcal E)}(d_i) \otimes_{\mathcal O_{\P(\mathcal E)}} q^*\mathcal U$ corresponding to the regular function $w$ on $Q$.  

Recall that
\begin{displaymath}
  Q_- =  \op{V}_S(\mathcal E)\times_S (\op{V}_S(\mathcal U)\setminus \mathbf{0}_{\op{V}_S(\mathcal U)}). 
\end{displaymath}
We will replace the category $\dcoh{[Q_-/\mathbb{G}_m^2],w}$ by the derived category of a $\mathbb{G}_m$-equivariant sheaf of dg-algebras over $S$. Let $\pi: Q \to S$ denote the projection. We shall also denote the projection, $\pi: Q_- \to S$. Recall that 
\begin{displaymath}
 \pi_* \mathcal O_{Q} \cong \op{Sym}(\mathcal E)\otimes_{\cO_S} \op{Sym}(\mathcal U)
\end{displaymath}
and define
\begin{displaymath}
 \cR:=\pi_* \mathcal O_{Q_-} \cong \op{Sym}(\mathcal E)\otimes_{\cO_S} \op{Sym}(\mathcal U,\cU^{-1}),
\end{displaymath} 
where we used the notation 
$$\op{Sym}(\mathcal U,\cU^{-1}) := \bigoplus_{k\in\Z}\cU^{\otimes k}.$$
Since $\pi$ is affine, we have an equivalence 

\begin{equation}\label{eqn: affine}
  \dcoh{[Q_-/\mathbb{G}_m^2],w}\cong\dmodgraded{\Z^2}{\cR,w}
\end{equation}
of $\dcoh{[Q_-/\mathbb{G}_m^2],w}$ with $\dmodgraded{\Z^2}{\cR,w}$, the category of $\Z^2$-graded coherent factorizations over $\cR$. From now on we will be working in this latter category.

Let $Y = \mathbf{0}_{\op{V}_S(\cE)} \times_S (\op{V}_S(\mathcal U) \setminus \mathbf{0}_{\op{V}_S(\cU)}) \subset Q_-$. 
We have the object
\begin{center}
\begin{tikzpicture}[description/.style={fill=white,inner sep=2pt}]
\matrix (m) [matrix of math nodes, row sep=3em, column sep=3em, text height=1.5ex, text depth=0.25ex]
{  0 & \mathcal O_{Y} \\ };
\path[->,font=\scriptsize]
(m-1-1) edge[out=30,in=150] node[above] {$0$} (m-1-2)
(m-1-2) edge[out=210, in=330] node[below] {$0$} (m-1-1);
\end{tikzpicture}
\end{center}
of $\dcoh{[Q_-/\mathbb{G}_m^2],w}$ which corresponds to the object
$$\mathcal G:=(0,\op{Sym}(\cU,\cU^{-1}),0,0)$$ in $\dmodgraded{\Z^2}{\cR,w}$.

\begin{proposition-section}\label{prop:localgenerator}
  The objects $\left\{ \mathcal G \otimes_{\mathcal O_S} \mathcal L \right\}$, with $\mathcal L$ invertible $\mathbb{G}_m$-equivariant sheaves on $S$, generate $\dmodgraded{\Z^2}{\cR,w}$ i.e. for any non-zero equivariant complex $\cM$ of $\cO_S$-modules, there exists $\cL$ such that $\op{Ext^n}(\cL,\cM)\neq 0$, for some integer $n$.
\end{proposition-section}

\begin{proof}
 Let $C$ be the zero locus of $w$ inside the relative spectrum $\underline{\op{Spec}} \ \mathcal R$.  Then, there is an essentially surjective functor, 
 \[
 \Upsilon: \op{D}_{\op{sg}}^{\mathbb{G}_m^2}(C) \to \dcoh{[Q_-/\mathbb{G}_m^2],w} \cong \dmodgraded{\Z^2}{\cR,w},
 \]
 by \cite[Proposition 3.64]{BFK11}. The category, $\op{D}_{\op{sg}}^{\mathbb{G}_m^2}(C)$, is generated by objects pushed forward from the singular locus of $C$ by \cite[Corollary 4.14]{BFK11}. Since we have assumed that $X$ is smooth, the singular locus of $C$ is exactly $Y = \underline{\op{Spec}} \ \op{Sym}(\mathcal U,\cU^{-1})$. The $\mathbb{G}_m^2$-equivariant derived category of $Y$ is equivalent to the $\mathbb G_m$-equivariant derived category of $S$ under pullback via the projection 
 \begin{displaymath}
  \rho: Y \to S.
 \end{displaymath}
The composition of pullback to $\dbcoh{Y}$, push-forward to $\op{D}_{\op{sg}}^{\mathbb{G}_m^2}(C)$ and $\Upsilon$, is essentially surjective and maps an invertible equivariant sheaf $\cL$ on $S$ to $\cG\otimes_{\cO_S} \cL$.  Therefore it suffices to show that the the derived category of $[S/\mathbb{G}_m]$ is generated by invertible sheaves. 

 Let $\cM$ be an equivariant complex of $\cO_S$-modules such that $\cH^{n}\cM \neq 0$ for some $n$. By \cite[Lemma 2.10]{Tho2}, there is an equivariant invertible sheaf $\cL^{-1}$ and a non-zero invariant section $\mu\in \Gamma(S, \op{ker}d^n_{\cM}\otimes_{\cO_S}\cL^{-1})^{\mathbb{G}_m}$ whose image in $\Gamma(S, \cH^n\cM\otimes_{\cO_{S}\cL^-1})^{\mathbb{G}_m}$ is also non-zero. Here, in addition to the Lemma in \emph{loc. cit.}, we are using the resolution property by direct sums of line bundles for $\GG_m$-equivariant sheaves over a point. This defines a map $\cO_S \rightarrow \cM\otimes \cL^{-1}[n]$ and therefore a map $\cL \rightarrow \cM[n] $ which is non-trivial in cohomology. 
\end{proof}

Define $$\mathcal B_w:=\bigoplus_{i\in\Z}{\mathbf{R}\mathcal Hom}_{\mathcal R,w,\Z^2}(\mathcal G(i,0),\mathcal G),$$
and the functor,
\begin{align*}
  F: \dmodgraded{\Z^2}{\cR,w} & \to \op{D}(\op{Mod}_{\Z} \mathcal B_w) \\
 \mathcal E & \mapsto \bigoplus_{i\in\Z}{\mathbf{R}\mathcal Hom}_{\mathcal R,w,\Z^2}(\mathcal G(i,0),\mathcal E).
\end{align*}

\begin{proposition-section}\label{prop: ainftyequiv}
 The functor, $F$, is fully-faithful.
\end{proposition-section}

\begin{proof}
 We first check that $F$ induces natural quasi-isomorphisms of chain complexes 
 \begin{displaymath}
  \mathbf{R} \mathcal Hom_{\mathcal R,w,\Z^2} (\mathcal G,\mathcal E) \to \mathbf{R} \mathcal Hom_{\mathcal B_w,\Z}(F (\mathcal G), F(\mathcal E))
 \end{displaymath}
 for any object $\mathcal E \in \dmodgraded{\Z^2}{\cR,w}$. Since $\mathcal B_w := F (\mathcal G)$, we have 
 \begin{gather*}
  \mathbf{R} \mathcal Hom_{\mathcal B_w,\Z}(F (\mathcal G), F(\mathcal E)) = \mathcal Hom_{\mathcal B_w,\Z}(\mathcal B_w, F(\mathcal E)) \\ \cong (F(\mathcal E))_0 := \left(\bigoplus_{i\in\Z}{\mathbf{R}\mathcal Hom}_{\mathcal R,w,\Z^2}(\mathcal G(i,0),\mathcal E) \right)_0 = \mathbf{R}\mathcal Hom_{\mathcal R,w,\Z^2}(\mathcal G,\mathcal E).
 \end{gather*}

  Therefore, 
 \begin{align*}
  \mathbf{R} \mathcal Hom_{\mathcal R,w,\Z^2} (\mathcal G \otimes_{\mathcal O_S} \mathcal L,\mathcal E) & \cong \mathbf{R}\mathcal Hom_{\mathcal R,w,\Z^2} (\mathcal G, \mathcal E \otimes_{\mathcal O_S} \mathcal L^{-1}) \\
  & \cong  \mathbf{R} \mathcal Hom_{\mathcal B_w,\Z} (F( \mathcal G ) , F( \mathcal E \otimes_{\mathcal O_S} \mathcal L^{-1}))\\
  & \cong \mathbf{R}\mathcal Hom_{\mathcal B_w,\Z} (F (\mathcal G \otimes_{\mathcal O_S} \mathcal L), F(\mathcal E)),
 \end{align*}
 where the last step follows from the fact that $F$ commutes with $\cbullet \otimes_{\cO_S}\cL$. 

 Applying $ \mathbf{R} \Gamma$ shows that $F$ is fully-faithful on the smallest thick subcategory of $\dmodgraded{\Z^2}{\cR,w}$ containing all the objects $\mathcal G \otimes_{\mathcal O_S} \mathcal L$ for any equivariant invertible sheaf $\mathcal L$ on $S$. By Proposition~\ref{prop:localgenerator}, this is all of $\dmodgraded{\Z^2}{\cR,w}$.
\end{proof}

\begin{lemma-section} \label{lemma: compact generators for sheafy dg-mod}
 The category, $\op{D}(\op{Mod}_{\Z} \mathcal B_w)$, is a compactly-generated triangulated category. Moreover, the set of objects $\{ \mathcal B_w \otimes_{\mathcal O_S} \mathcal L \}$ is a set of compact generators.
\end{lemma-section}

\begin{proof}
  The category, $\op{D}(\op{Mod}_{\Z} \mathcal B_w)$, admits all small coproducts so we only need to find a set of compact objects whose right orthogonal is zero. It is clear that each $\mathcal B_w \otimes_{\mathcal O_S} \mathcal L$ is compact. The proof of the generation statement is identical to the last part of the proof of Proposition \ref{prop:localgenerator}. For a $\cB_w$-module $\cM$ such that $\mathcal H^n(\mathcal M) \not = 0$, there is an equivariant invertible sheaf $\cL$ such that there is a section $\mu\in\Gamma(S, \op{ker} d_{\mathcal M}^n \otimes_{\mathcal O_S} \mathcal L^{-1})^{\mathbb{G}_m}$. This gives a non-zero morphism $\mathcal B_w \to \mathcal M \otimes_{\mathcal O_S} \mathcal L^{-1}[n]$ by sending $1$ to $\mu$ and hence a non-zero morphism $\cB_w\otimes_{\cO_S}\cL \rightarrow \cM[n]$. Thus, $\{ \mathcal B_w \otimes_{\mathcal O_S} \mathcal L\}^{\perp} = 0$. 
%
%
\end{proof}

\begin{definition}
  A $\mathcal B_w$-module $\mathcal M$ is \newterm{perfect} if it lies in the smallest thick subcategory classically generated by the $\mathcal B_w\otimes_{\cO_S}\cL$.  The category $\op{D}_{\op{pe}}(\op{Mod}_{\Z} \mathcal B_w)$ is the full subcategory of perfect objects in $\op{D}(\op{Mod}_{\Z} \mathcal B_w)$.
\end{definition}

\begin{lemma-section} \label{lemma: compact = perf = B x L}
 The compact objects of $\op{D}(\op{Mod}_{\Z} \mathcal B_w)$ and the perfect objects coincide.
\end{lemma-section}

\begin{proof}
  Combining Lemma~\ref{lemma: compact generators for sheafy dg-mod}, \cite[Lemma 1.7]{Nee} and \cite[Theorem 2.1]{Nee} proves that the subcategory of all compact objects lies in the smallest thick subcategory containing all objects of the form $\mathcal B_w \otimes_{\mathcal O_S} \mathcal L$ for invertible equivariant sheaves $\mathcal L$. The other inclusion is by definition. 
\end{proof}
Before we can prove the next proposition, we need to make a quick digression. First recall the following definition.

\begin{definition}
 An additive category, $\mathcal A$, is \newterm{idempotent complete} if for every morphism $e: A \to A$ with $e^2 = e$ there is a splitting
 \begin{displaymath}
  A \cong \op{ker}(e) \oplus \op{im}(e).
 \end{displaymath}
\end{definition}

We will need a few facts about idempotent completeness.

The \newterm{idempotent completion} of $A$, denoted $\widetilde{A}$, is the additive category whose objects are pairs $(a,e)$ with $A \in \mathcal A$ and $e: A \to A$ idempotent. A morphism between $(A,e)$ and $(A',e')$ is a morphism $f: A \to A'$ in $\mathcal A$ such that $fe = e'f = f$. 
\begin{theorem-section} \label{thm: idempotent compl of triang is triang}
 Let $\mathcal T$ be a triangulated category. The idempotent completion, $\widetilde{\mathcal T}$, can be equipped uniquely with the structure of a triangulated category such that the natural inclusion $\mathcal T \to \widetilde{\mathcal T}$ is exact. Moreover, triangles in $\widetilde{\mathcal T}$ are exactly retracts of triangles in $\mathcal T$.
\end{theorem-section}

\begin{proof}
 This is \cite[Theorem 1.5, Theorem 1.12]{BS01}. 
\end{proof}
 
\begin{lemma-section} \label{lem:cutlemma}
 Let $\mathcal T = \langle \mathcal A, \mathcal B \rangle$ be a weak semi-orthogonal decomposition. Then, there is a weak semi-orthogonal decomposition $\widetilde{\mathcal T} = \langle \widetilde{\mathcal A},\widetilde{\mathcal B} \rangle$. Moreover, $\mathcal T$ is idempotent complete if and only if $\mathcal A$ and $\mathcal B$ are idempotent complete.
\end{lemma-section}
 
\begin{proof}
 The second statement is an immediate corollary of the first statement. So, assume that we have weak semi-orthogonal decomposition $\mathcal T = \langle \mathcal A, \mathcal B \rangle$. We have natural inclusions $\widetilde{\mathcal A}, \widetilde{\mathcal B} \to \widetilde{\mathcal T}$. It is clear from the definition of the idempotent completion that $\widetilde{\mathcal A} \subseteq \widetilde{\mathcal B}^{\perp}$. Let $(T,e)$ be an object of $\widetilde{\mathcal T}$. Then, $T$ sits in a triangle
 \begin{displaymath}
  B \overset{\phi}{\to} T \overset{\psi}{\to} A \overset{\lambda}{\to} B[1].
 \end{displaymath}
 There is a unique morphism of triangles 
 \begin{center}
 \begin{tikzpicture}
   \matrix (m) [matrix of math nodes, row sep=2em, column sep=3em]
   { B & T & A & B[1] \\
     B & T & A & B[1] \\};
   \path[->]
     (m-1-1) edge node[above] {$\phi$} (m-1-2)
     (m-1-2) edge node[above] {$\psi$} (m-1-3)
     (m-1-3) edge node[above] {$\lambda$} (m-1-4)
     (m-2-1) edge node[above] {$\phi$} (m-2-2)
     (m-2-2) edge node[above] {$\psi$} (m-2-3)
     (m-2-3) edge node[above] {$\lambda$} (m-2-4)
     (m-1-1) edge node[left] {$e_b$} (m-2-1)
     (m-1-2) edge node[left] {$e$} (m-2-2)
     (m-1-3) edge node[left] {$e_a$} (m-2-3)
     (m-1-4) edge node[left] {$e_b[1]$} (m-2-4)
     ;
 \end{tikzpicture}
 \end{center}
 By Lemma 2.4 in \cite{Kuz09}, which says that the diagram decomposing any object of a derived category with respect to a semi-orthogonal decomposition is unique and functorial, $e_a$ and $e_b$ are idempotents. Thus, the sequence
 \begin{displaymath}
  (B,e_b) \overset{\phi e_b}{\to} (T,e) \overset{\psi e}{\to} (A,e_a) \overset{\lambda e_a}{\to} (B[1],e_b[1])
 \end{displaymath}
 is a retract of the exact triangle
 \begin{displaymath}
  B \to T \to A \to B[1].
 \end{displaymath}
 and is, by definition, an exact triangle. Thus, we satisfy the conditions of a weak semi-orthogonal decomposition.
\end{proof}

\begin{proposition-section} \label{proposition: essentially surject onto perfects}
 The essential image of $F$ is the subcategory of perfect modules. 
\end{proposition-section}

\begin{proof}
  Since by Proposition~\ref{prop:localgenerator} $\dmodgraded{\Z^2}{\cR,w}$ is generated by objects of the form $\mathcal G \otimes_{\O_S} \mathcal L$ and $F$ is fully-faithful by Proposition~\ref{prop: ainftyequiv}, the essential image of $F$ is dense in the smallest thick subcategory containing the set of objects $\{ \mathcal B_w \otimes_{\mathcal O_S} \mathcal L \}$ for $\mathcal L$ invertible. By Lemma~\ref{lemma: compact = perf = B x L}, this is the subcategory of perfect modules.
  Finally, by Corollary \ref{proposition: VGIT for degree d fibrations} and Lemma \ref{lem:cutlemma}, $\dmodgraded{\Z^2}{\cR,w}$ is idempotent complete thus the essential image is also thick. 
\end{proof}

We can thus prove the following: 
\begin{proposition-section}\label{prop:generatoroutcome}
 There is an equivalence
 \begin{equation*}
   \dcoh{[Q_-/\mathbb{G}_m^2],w} \iso \op{D}_{\op{pe}}(\op{Mod}_{\Z} \mathcal B_w) 
 \end{equation*}
\end{proposition-section}

\begin{proof}
  Note first that by \eqref{eqn: affine} we have an equivalence of $\dcoh{[Q_-/\mathbb{G}_m^2],w}$ with $\dmodgraded{\Z^2}{\cR,w}$. Propositions \ref{prop: ainftyequiv} and \ref{proposition: essentially surject onto perfects} show that the functor $F$ gives the equivalence $$ F: \dmodgraded{\Z^2}{\cR,w} \to \op{D}_{\op{pe}}(\op{Mod}_{\Z} \mathcal B_w), $$and this completes the proof.
\end{proof}

We now calculate the endomorphism sheaf of dg-algebras of $\mathcal G$ and obtain a more explicit description of $\cB_w$. To this end, we will replace $G$ with a quasi-isomorphic locally-free factorization in $\dmodgraded{\Z^2}{\cR,w}$. 

Define a factorization by 
\begin{align*}
 \mathcal F^{-1} & := \bigoplus_{r=2s+1} \Lambda^{r}(\mathcal E)\otimes_{\cO_S}\cR (r,0) \\
 \mathcal F^0 & :=  \bigoplus_{r=2s} \Lambda^r(\mathcal E)\otimes_{\cO_S}\cR (r,0) \\
\end{align*}
with differential $$\delta_{\cF}=\op{d}_{\text{Koszul}}+\gamma\wedge\cbullet.$$

The Koszul differential $\op{d}_\text{Koszul}$ is given by the composition
\begin{align*}\Lambda^i{\cE}\otimes_{\cO_S}\op{Sym}\cE\otimes_{\cO_S}\op{Sym}(\cU,\cU^{-1})(i,0)& \rightarrow\Lambda^i{\cE}\otimes_{\cO_S}\cE^*\otimes_{\cO_S}\cE\otimes_{\cO_S}\op{Sym}\cE\otimes_{\cO_S}\op{Sym}(\cU,\cU^{-1})(i,0) \\& \rightarrow\Lambda^{i-1}{\cE}\otimes_{\cO_S}\op{Sym}\cE\otimes_{\cO_S}\op{Sym}(\cU,\cU^{-1})(i-1,0)
\end{align*} 
where the first map is induced by the map $\cO_S\rightarrow \cE^*\otimes_{\cO_S}\cE$ corresponding to the identity element in ${\mathcal End}_{\cO_S}(\cE)$, while the second one is induced by contraction $\Lambda^i{\cE}\otimes_{\cO_S}\cE^*\rightarrow\Lambda^{i-1}{\cE}$ and multiplication in $\op{Sym}\cE$.

The relative $1$-form $\gamma$ is defined as 
$$\gamma :=\frac1d\text{d}w\in\Lambda^1\cE\otimes_{\cO_S}\op{Sym}^{d-1}\cE\otimes_{\cO_S}\op{Sym}^{1}\cU$$ 
where $\text{d}w$ is the relative algebraic deRham differential of $w$.

In local coordinates, where the $x_i$ are a basis of $\cE$, $\displaystyle{\frac{\partial}{\partial x_i}}$ is the corresponding basis of $1$-forms in $\Lambda(\mathcal E^*)$, and $dx_i$ is the corresponding basis of $1$-forms in $\Lambda(\mathcal E)$, we have $\delta_{\cF}=i_\eta +\gamma\wedge\cbullet$, where $$\eta=\sum x_i\frac{\partial}{\partial x_i}\t{ and }\gamma= \frac{u}{d} \sum \frac{\partial w}{\partial x_i}dx_i.$$

\begin{lemma-section} \label{lemma: loc free replacement}
 The factorization $\mathcal F$ is quasi-isomorphic to the factorization \\
 $\mathcal G=(0,\op{Sym}(\cU,\cU^{-1}),0,0)$.
\end{lemma-section}
\begin{proof}
 This is a direct consequence of \cite[Theorem 3.9]{BDFIK}. 
\end{proof}

Since the equivalences above respect the grading shifts, we can now use the graded endomorphism algebra $\bigoplus_{i\in\Z}{\mathcal Hom}_{\mathcal R,w,\Z^2}(\mathcal F(i,0),\mathcal F)$, which we still denote by $\mathcal B_w$, to calculate $\bigoplus_{i\in\Z}{\mathbf R\mathcal Hom}_{\mathcal R,w,\Z^2}(\mathcal G(i,0),\mathcal G)$.

Note that as an $\cR:= \op{Sym}\cE \otimes_{\cO_S} \op{Sym}(\cU, \cU^{-1})$-module, we can write $\cB_w$ as 
$$\cB_w = \Lambda^{\bullet}\cE\otimes_{\cO_S}\op{Sym}\cE\otimes_{\cO_S}\op{Sym}(\cU,\cU^{-1})\otimes_{\cO_S}\Lambda^{\cbullet}\cE^*.$$ 
Under this description, a basic local section $(\beta \otimes f \otimes \theta)$, denoted briefly as $(\beta,f,\theta)$, for $\beta \in \Lambda^\bullet (\cE)$, $f\in \op{Sym}(\cE)\otimes_{\cO_S} \op{Sym}(\cU,\cU^{-1})$ and $\theta\in\Lambda^\bullet (\cE^*)$, corresponds to the endomorphism of $\cF$ that acts on basic local sections of $\cF$ by
\[
(\beta,f,\theta)(\beta',f')=(\langle\theta,\beta'\rangle\beta,ff'),
\]
where the pairing $\langle\theta,\beta'\rangle$ is the natural pairing between $\Lambda^\bullet(\cE^*)$ and $\Lambda^\bullet(\cE)$ (in particular, the pairing is $0$ unless $\theta$ and $\beta'$ live in the same wedge power, so this pairing is different from the contraction pairing).

The sections of the sheaf $\cB_w$ have a dg algebra structure with differential $\overline{\partial}$ induced by $\delta_\cF$ and product structure, induced by composition of the endomorphisms, given by
\begin{equation} \label{eq: product formula}
 m(b,b') :=: bb' := (\langle\theta,\beta'\rangle\beta,ff',\theta'),
 \end{equation}
for 
$b=(\beta,f,\theta)$ and $b'=(\beta',f',\theta')$
basic local sections as above. 

Finally, note that there are two gradings on $\cB_w$, the internal $\Z$-grading and the cohomological $\Z$-grading. Sections of $\Lambda^1 \cE$ have internal degree $-1$ and cohomological degree $-1$, sections of $\Lambda^1 \cE^*$ have the opposite gradings; whereas sections of $\cU$ have internal degree $d$ and cohomological degree $2$ and sections of $\op{Sym} \cE$ have internal degree $-1$ and cohomological degree $0$.

\section{Transferring to an \texorpdfstring{$A_{\infty}$-}{A-infinity }structure}\label{sec:ainfinitymain}
We will be working with sheaves of $A_\infty$-algebras over $S$ and modules over them. These will have an internal $\Z$-grading in addition to the usual cohomological $\Z$-grading on such objects. We require that the restriction maps for these sheaves are always strict morphisms of $A_\infty$-algebras and modules. 

In this section we will prove the following theorem:
\begin{theorem-section} \label{thm:ainfinitymain}
 There exists a sheaf of graded $A_\infty$ $\op{Sym}(\cU,\cU^{-1})$-algebras $(\cA,\mu^{\bullet})$ with 
 $$\cA=\op{Sym}(\cU,\cU^{-1})\otimes_{\cO_S}\Lambda^{\cbullet}\cE^*,$$
 such that there is an $A_\infty$-quasi-isomorphism
 \begin{equation*}
	\cA \to \cB_w.
 \end{equation*}
 
 Moreover,
 \begin{enumerate}
  \item[(i)] If $d=2$, $\cA$ is a sheaf of Clifford algebras. The Clifford relations are given in local coordinates, with $x_i$ a local basis of $\cE$ and $v_i$ the dual basis, by
$$ \mu^2(1\otimes v_i,1\otimes v_j) + \mu^2(1\otimes v_j, 1\otimes v_i) = \frac{\partial^2w}{\partial x_i\partial x_j} $$
  \item[(ii)] If $d>2$, $\cA$ is a minimal $A_\infty$ algebra with the following properties:
  \begin{itemize}
    \item The multiplication $\mu^2$ is given by the usual wedge product on $\cA$ induced from $\Lambda^\bullet(\cE^*)$.
    \item $\mu^k = 0$ for $3\leq k\leq d-1$ and, in local coordinates, we have
    $$\mu^d(1\otimes v_{i_1},\ldots,1\otimes v_{i_d})=\frac{1}{d!}\frac{\partial^d w}{\partial x_{i_1}\ldots\partial x_{i_d}}$$ 
    where the $v_{i_j}$ are not necessarily distinct.
  \end{itemize}
 \end{enumerate} 
\end{theorem-section}

\begin{remark}
Local calculations have been provided in the above statement as they are more explicit and easier to state. We will also give formulas for the global $\mu^k$ later in Lemma \ref{lem: multiplicative structure} and Proposition \ref{prop:higherproducts} below. 
\end{remark}

To prove Theorem \ref{thm:ainfinitymain}, we first observe that $\cA$ is the cohomology sheaf of algebras of a different $\Z$-graded dg-algebra $\cB$, which is the same as $\cB_w$ except that it has a modified differential and is easily seen to be formal. The strategy is to use the homological perturbation lemma to obtain an $A_\infty$ structure on $\cA = \op{H}^{\cbullet}(\cB)$ that makes it quasi-isomorphic to $\cB_w$. 

To this end, let us consider the pair $(\cF,\op{d}_{Koszul})$ instead of $(\cF,\delta_{\cF})$ where we had $\delta_{\cF}=\op{d}_{Koszul} + \gamma\wedge \cbullet$. Let $\cB$ be the endomorphism dg algebra of $(\cF,\op{d}_{Koszul})$. As we had for $\cB_w$, we have
\[
\cB=\Lambda^{\cbullet}\cE\otimes_{\cO_S}\op{Sym}\cE\otimes_{\cO_S}\op{Sym}(\cU,\cU^{-1})\otimes_{\cO_S}\Lambda^{\cbullet}\cE^*,
\]
with the same product structure described in the previous section, but with a differential now induced by $\op{d}_{Koszul}$, different from the differential of $\cB_w$.

\begin{definition}
 We denote the differential induced by $\op{d}_{Koszul}$ by $\partial:\cB\to \cB$ and differential of $\cB_w$ by $\overline\partial$ in keeping with standing notation for homological perturbation.
\end{definition}

\begin{lemma-section} \label{lemma: formality with Koszul differential}
 The sheaf of dg-algebras, $(\cB,\op{d}_{Koszul})$, is formal with 
 \begin{displaymath}
  \cA :=\op{H}^{\cbullet}(\cB) \cong \op{Sym}(\cU,\cU^{-1})\otimes_{\cO_S}\Lambda^{\cbullet}\cE^*.
 \end{displaymath}
\end{lemma-section}

\begin{proof}
 The factorization, $\mathcal F$, after forgetting the differential $\gamma \wedge \bullet$ becomes a chain complex quasi-isomorphic to $\op{Sym}(\cU,\cU^{-1})$. Since $\mathcal F$ is locally-free, we have a quasi-isomorphism
 \begin{displaymath}
  \mathcal B = \bigoplus_i \mathcal Hom_{\mathcal R,w\Z^2} (\mathcal F(i,0),\mathcal F) \cong \bigoplus_i \mathcal Hom_{\mathcal R,w\Z^2} (\mathcal F(i,0),\op{Sym}(\cU,\cU^{-1})).
 \end{displaymath}
 The latter chain complex is formal with cohomology exactly as claimed.
\end{proof}

Note that sections of $\cU$ have internal degree $d$ and cohomological degree $2$ whereas sections of $\Lambda^1 \cE^*$ have internal degree $1$ and cohomological degree $1$.

We now define the maps which will allow us to transfer the dg-structure on $\cB$ to the $A_\infty$-structure on $\A$.

We define $p:\cB\rightarrow\cA$ to be the projection by the ideal generated by $\op{Sym}\cE$ and $\Lambda^{\cbullet}\cE$. Note that this is not a map of sheaves of algebras, but a map of sheaves of chain complexes.

Next, we will define a map $i:\cA\rightarrow\cB$. In order to do that, for each $a\in\Z_{\geq0}$, consider the composition $\alpha_a$ of the maps
\begin{align*}
 & \Lambda^a\cE\otimes_{\cO_S}\op{Sym}\cE\otimes_{\cO_S}\op{Sym}(\cU,\cU^{-1})\otimes_{\cO_S}\Lambda^{\cbullet}\cE^*\\ 
 & \,\,\,\,\,\,\,\,\,\,\,\, \rightarrow\Lambda^a\cE\otimes_{\cO_S}\op{Sym}\cE\otimes_{\cO_S}\op{Sym}(\cU,\cU^{-1})\otimes_{\cO_S}(\Lambda^1\cE\otimes_{\cO_S}\Lambda^1\cE^*)\otimes_{\cO_S}\Lambda^{\cbullet}\cE^*\\
 & \,\,\,\,\,\,\,\,\,\,\,\, \rightarrow\Lambda^{a+1}\cE\otimes_{\cO_S}\op{Sym}\cE\otimes_{\cO_S}\op{Sym}(\cU,\cU^{-1})\otimes_{\cO_S}\Lambda^{\cbullet}\cE^*,
\end{align*}
where the first map is induced by the map $\cO_S\rightarrow\cE\otimes_{\cO_S}\cE^*$ corresponding to the identity in ${\mathcal End}_{\cO_S}(\cE)$, while the second one is induced by the wedge product. We define $i_0:\cA\rightarrow\cB$ to be the obvious inclusion and $i_k:\cA\rightarrow\cB$ by 
$$i_k=\frac{1}{k!}\alpha_{k-1}\circ\alpha_{k-2}\circ\ldots\circ\alpha_0\circ i_0.$$ 
We can now define $i$ by 
$$i=\sum_{k\geq0}i_k.$$

Lastly, we want to define a homotopy between $ip$ and $1$. We first define $h_0:\cB\rightarrow\cB$ to take $(\beta,f,\theta)$ to $(\text{d}f\wedge\beta,\theta)$ for basic local sections ($\beta,f,\theta) $ of $\cB$. Here, $\op{d}$ is the deRham differential. One can then define 
$$h_k=\frac{1}{s(s+1)\ldots (s+k)}\alpha_{k-1}\circ\alpha_{k-2}\circ\ldots\circ\alpha_0\circ h_0,$$ 
where $s= \text{deg }f+\text{deg }\beta$. The homotopy $h:\cB\rightarrow\cB$ is defined to be  $$h=\sum_{k\geq0}h_k.$$

\begin{lemma-section} \label{lemma: hom perturb prereqs}
 The morphisms, $h,i,p$, satisfy: 
 \begin{itemize}
  \item $i:\cA\ra \cB$ is an algebra homomorphism.
  \item $p i = 1$
  \item $h^2 = ph = hi = 0$
  \item $i p - 1 =  \op{d}_{Koszul}h + h \op{d}_{Koszul}$.
 \end{itemize}
\end{lemma-section}

\begin{proof}
 This is a straightforward, but tedious, computation. It is suppressed.
\end{proof}

\begin{proposition-section} \label{proposition: minimal model}
 There exists an $A_{\infty}$ structure, $\mu$, on $\mathcal A$ and a quasi-isomorphism
 \begin{displaymath}
  f: (\mathcal A, \mu) \to \mathcal B_w.
 \end{displaymath}
\end{proposition-section}

\begin{proof}
 Lemmas \ref{lemma: formality with Koszul differential} and \ref{lemma: hom perturb prereqs} guarantee that we can apply homological perturbation, as in \cite{KS}, Section 2.4 in \cite{Cra04}, or \cite{Ma04}, which provides the desired $\mu$ and $f$.
\end{proof}

The general formulas for the higher products \cite{KS} of Proposition~\ref{proposition: minimal model} on $\cA$ can be described as sums over ribbon trees with one root and d leaves such that the valency of any vertex is either 2 or 3. 

\begin{definition}\cite{KS,IgKi}
 A \newterm{ribbon tree} is a tree $T$ with a collection of vertices, a collection of semi-infinite edges $\{e_0,\ldots,e_n\}$ and a collection of finite edges such that:
 \begin{itemize}
   \item[(a)] Each semi-infinite edge is incident to a single vertex.
   \item[(b)] Each finite edge is incident to exactly two vertices.
   \item[(c)] The planar structure of $T$ has the semi-infinite edges $e_0,\ldots,e_n$ arranged in clockwise order. The edge $e_0$ is called the root and $e_1,\ldots,e_n$ are called the leaves.
   \end{itemize}
\end{definition}

Each such tree $T$ with $k$-leaves determines a term $\mu^k_T$ in the higher product $\mu^k$ on $\cA$. Orienting $T$ from the leaves to the root, we can explicitly describe the composition of maps that define $\mu^k_T$ as follows:

\begin{itemize}
\item For all incoming edges we have the map $i:\cA\ra \cB$

\item For the outgoing edges we have the map $p:\cB\ra \cA$

\item For all finite edges we have the map $h:\cB\ra \cB$ 

\item For a bivalent vertex we have the map $(\overline\partial-\partial):\cB\ra \cB$

\item For a trivalent vertex we have the multiplication on $\cB$
\end{itemize}

\begin{figure}[h!]
\centering
\begin{picture}(112,120)
  \put(30,20){\includegraphics[scale=0.75]{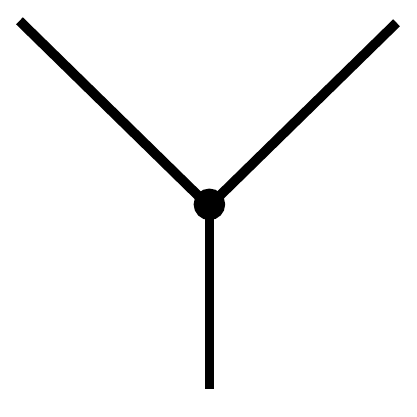}}
\put(60,60){$m$}
\put(44,100){$i$}
\put(120,100){$i$}
\put(80,24){$p$}
\put(22, 0){$\mu^2_T = p(m(i(\cbullet),i(\cbullet)))$}
\end{picture}
\hspace{0.6in}
\begin{picture}(112,120)
  \put(30,20){\includegraphics[scale=0.75]{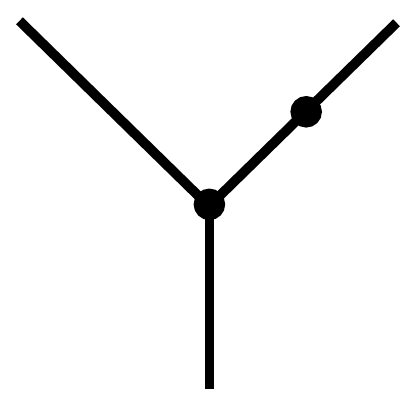}}
\put(60,60){$m$}
\put(44,100){$i$}
\put(120,100){$i$}
\put(80,24){$p$}

\put(104,79){$\overline{\partial}-\partial$}
\put(79,79){$h$}
\put(-10, 0){$\mu^2_T = p(m(i(\cbullet),(h\circ(\overline\partial - \partial)\circ i)(\cbullet))))$}

\end{picture}
\hspace{0.6in}
\caption{Examples of ribbon trees with one and two vertices.}
        \label{fig:ribbonstreees}
\end{figure}

\begin{figure}[h!]
\centering
\begin{picture}(270,240)
  \put(30,0){\includegraphics[scale=0.75]{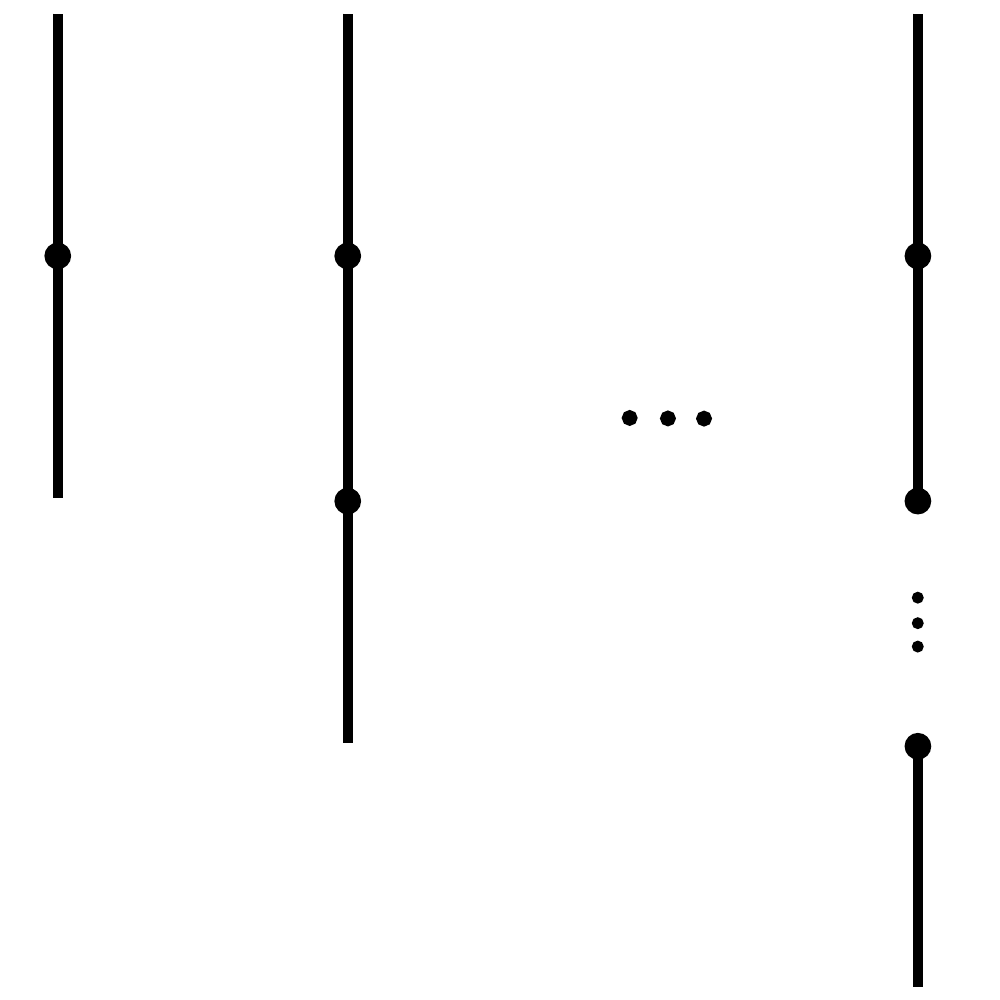}}
  \put(49,158){$\overline{\partial} - \partial$}
  \put(112,158){$\overline{\partial} - \partial$}
  \put(112,104){$\overline{\partial} - \partial$}
  \put(236,158){$\overline{\partial} - \partial$}
  \put(236,104){$\overline{\partial} - \partial$}
  \put(236,51){$\overline{\partial} - \partial$}
  
  \put(49,212){$i$}
  \put(112,212){$i$}
  \put(236,212){$i$}
  
  \put(49,104){$p$}
  \put(112,51){$p$}
  \put(236,-3){$p$}

  \put(94,131){$h$}
  \put(218,131){$h$}
\end{picture}
        \caption{Ribbon trees contributing to the differential on $\cA$}
        \label{fig:mu1}\label{fig:differential}
\end{figure}

The higher products are given by
\begin{equation} \label{eq: tree summing formula}
\mu^k = \sum	 \mu^k_T,
\end{equation}
where the sum is taken over all ribbon trees with $k$-leaves.

Corollary~\ref{corollary: replacement by minimal model} above is the first part of Theorem \ref{thm:ainfinitymain}. Using the explicit description of the $\mu$ above, we will now verify the properties of the $A_\infty$-structure on $\cA$. In the process of doing so, we will make some calculations in local coordinates. More precisely, consider any affine open $U\subset S$ where $\cE$ and $\cU$ are trivial and take $\left\{ x_i \right\}$ to be a basis of $\cE|_{U}$ and $u$ in $\cU$. We will denote the corresponding basis of $\Lambda^1 \cE$ by $\left\{  dx_i\right\}$ and the dual basis of $\Lambda^1 \cE^*$ by $\left\{ \frac{\partial}{\partial x_i} \right\}$ or by $\left\{ v_i \right\}$.

In these local coordinates, the formulas above can be rewritten as follows:
\begin{itemize}
\item $i:\cA\ra \cB$ is given locally by 
\begin{equation} \label{eq: i formula}
i(r,\theta)=\sum_{k\geq0}\sum_{j_1<\ldots <j_k}(dx_{j_1}\wedge\ldots dx_{j_k},r,\frac{\partial}{\partial x_{j_k}}\wedge\ldots\wedge\frac{\partial}{\partial x_{j_1}}\wedge\theta).
\end{equation}

\item $h:\cB\ra \cB$ is given locally by \\
  \parbox[c]{\linewidth}{ 
  \begin{flalign}\label{eq: h formula}
 	   & h(\beta,f,\theta)=&\\
	   & \nonumber \sum_{k\geq0}\frac{k\!}{s(s+1)\ldots (s+k)}\sum_{j_1<\ldots <j_k}(df\wedge\beta\wedge dx_{j_1}\wedge\ldots dx_{j_k},\frac{\partial}{\partial x_{j_k}}\wedge\ldots\wedge\frac{\partial}{\partial x_{j_1}}\wedge\theta ), &
  \end{flalign}
}
 where $s= \text{deg }f+\text{deg }\beta$ (when $f\beta$ is a constant, $h$ takes the element $(\beta,f,\theta)$ to zero).
 
\item $\overline\partial-\partial$ is given locally by:
\begin{equation} \label{eq: partial formula}
(\overline\partial-\partial)((\beta,f,\theta)=(\gamma\wedge\beta,uf,\theta)+(-1)^{\text{deg }\beta}\sum_k(\beta,\frac{\partial w}{\partial x_k}uf,i_{dx_k}\theta).
\end{equation}
\end{itemize}

\begin{remark}
In the arguments that follow, we will make use of a $\Z^2$-grading on $\cB$ different from the internal and cohomological gradings we have considered so far. This is only for the purposes of the arguments below and will help us in simplifying the computation of the $A_\infty$ products on $\cA$. The two $\Z$-gradings consist of:
\begin{itemize}

\item The $f$-degree on $\cB$ is a $\Z$-grading which comes from  considering $\op{Sym(\cE)}$ with its natural grading and the other factors of the tensor product in degree zero.

\item The $\beta$-degree on $\cB$ is a $\Z$-grading which comes from  considering $\Lambda(\cE)$ with its natural grading and the other factors of the tensor product in degree zero.

\end{itemize}
\end{remark}

\begin{lemma-section} \label{lem: properties of degree}
The $f$-degree and $\beta$-degree have the following properties:
\begin{itemize}
\item The $f$-degree of $h$ is $-1$. 
\item The $f$-degree of $\overline\partial-\partial$ is $d-1$. 
\item For any $b,b' \in \cB$ homogeneous in $f$-degrees, the $f$-degree of $m(b,b')$ is the sum of the $f$-degrees of $b$ and $b'$.
\item For any $b \in \cB$ homogeneous of $\beta$-degree $s$, $h(b) \in \cB_{>s, \beta}$, where $\cB_{>s, \beta}$ denotes the set of all elements of $\beta$-degree strictly larger than $s$. 
\item For any $b \in \cB$ homogeneous of $\beta$-degree $s$, $(\overline\partial-\partial)(b) \in \cB_{\geq s, \beta}$. 
\item For any $b\in \cB$ homogeneous of $\beta$-degree $s$ and any $b'\in \cB$, $m(b,b')$ is homogeneous of $\beta$-degree $s$. 
\end{itemize}
\end{lemma-section}
\begin{proof}
This follows immediately from the definitions.
\end{proof}

\begin{remark}
The number of trees contributing to each $A_\infty$ product is finite. Indeed, any tree containing a long enough chain will not contribute to the summation because both $h$ and $\overline\partial-\partial$ increase the $\beta$-degree and all elements of positive $\beta$-degree are in the kernel of $p$.
\end{remark}

\begin{lemma-section}
The sheaf of graded $A_\infty$-algebras $\cA$ is minimal, i.e., the differential $\mu^1$ on $\cA$ is trivial.
\end{lemma-section}
\begin{proof}
  Consider the trees in Figure \ref{fig:differential}.
\noindent By the tree summing formula \eqref{eq: tree summing formula}, the differential is given by 
\[
\mu^1(a)=p(\overline\partial-\partial)i(a)+p(\overline\partial-\partial)h(\overline\partial-\partial)i(a)+p(\overline\partial-\partial)h(\overline\partial-\partial)h(\overline\partial-\partial)i(a)+\ldots. 
\]                                                  
Since $\overline\partial-\partial$ has $f$-degree $d-1$, and $p$ kills everything of positive $f$-degree we have $p (\overline\partial - \partial) = 0$ (the $f$-degree is an $\N$-grading).
\end{proof}

\begin{lemma-section}\label{lem: multiplicative structure}
The multiplicative structure $\mu^2$ on $\cA$ can be described as follows:
\begin{enumerate}
 \item[(i)] If $d>2$, $\mu^2$ is induced by the ribbon tree with two leaves and a single trivalent vertex. In particular, the multiplication is given by the usual wedge product on $\cA$ induced from the wedge product on $\Lambda^\bullet(\cE^*)$.
 \item[(ii)] If $d=2$, $\mu^2$ is induced by the ribbon tree described in (i) plus the ribbon tree with two leaves and one bivalent vertex connected to the second leaf. In particular, the multiplication satisfies the Clifford relations, given locally by 
$$ \mu^2(1\otimes v_i,1\otimes v_j) + \mu^2(1\otimes v_j, 1\otimes v_i) = \frac{\partial^2w}{\partial x_i\partial x_j},$$
and globally by
$$ \mu^2(1\otimes s_1, 1\otimes s_2) + \mu^2(1\otimes s_2, 1\otimes s_1) = 
\frac12 \op{d}(\op{d}w \, \lrcorner \, s_1) \, \lrcorner \, s_2,$$ 
where $\op{d}$ is the deRham differential and $s_1, s_2 $ are sections of $\Lambda^1 \cE^*$.
\end{enumerate}
\end{lemma-section}

\begin{proof}

\begin{figure}[h!]
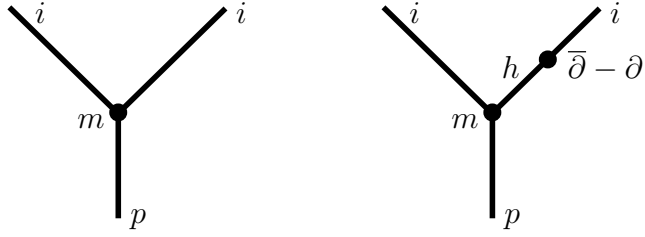

\centering
\begin{picture}(112,100)
  \put(30,0){\includegraphics[scale=0.75]{drawing01.pdf}}
\put(60,40){$m$}
\put(44,80){$i$}
\put(120,80){$i$}
\put(80,4){$p$}
\end{picture}
\hspace{0.3in}
\begin{picture}(112,100)
  \put(30,0){\includegraphics[scale=0.75]{drawing02.pdf}}
\put(60,40){$m$}
\put(44,80){$i$}
\put(120,80){$i$}
\put(80,4){$p$}

\put(104,59){$\overline{\partial}-\partial$}
\put(79,59){$h$}

\end{picture}
        \caption{Ribbon trees contributing to $\mu^2$}
        \label{fig:ribbonstreees2}
\end{figure}

Let $a_i=(r_i,v_i)$ and $a_j=(r_j,v_j)$ be in $\cA$ with $v_i,v_j\in \displaystyle{\left\lbrace\frac{\partial}{\partial x_1},\ldots, \frac{\partial}{\partial x_n}\right\rbrace}$.
 
\emph{(i)} Any tree $T$ contributing to the summation formula for $\mu^2$ will have exactly one trivalent vertex (since it has two leaves). Moreover, if we let $m$ be the number of bivalent vertices, we see that both $h$ and $(\overline\partial - \partial)$ appear $m$ times in $T$. However, by Lemma~\ref{lem: properties of degree}, $h$ has $f$-degree $-1$, while $\overline\partial-\partial$ has $f$-degree $d-1$, it follows that before applying $p$, the operator associated to $T$ will have $f$-degree $m(d-2)$.  Since $d>2$ the quantity $m(d-2)$ is positive if and only if $m>0$.  Consequently, if $m >0$ then the operator proceeding $p$ lies in the kernel of $p$. Therefore, the only tree contributing to the summation formula is the one with $m=0$ bivalent vertices. (c.f. Figure \ref{fig:ribbonstreees2})

Now, by what we argued above, to calculate $\mu^2(a_i,a_j)$ we need to compute $p(i(a_i)i(a_j))$. We first note that we only need to consider the $\beta$-degree 0 part of $i(a_i)$ since all the higher $\beta$-degree terms, after multiplication, will be sent to $0$ via $p$. Thus, without loss of generality, we may assume $i(a_i)=(1,r_i,v_i)$. By the definition of the product structure on $\cB$ the only part of $i(a_j)$ contributing to the product is the $\beta$-degree $1$ component (all the others will vanish when multiplied with $i(a_i)$). More precisely, we can assume $i(a_j)=(v_i^*,r_j,v_i\wedge v_j)$. Thus $a_ia_j=(r_ir_j, v_i\wedge v_j)$. 

\emph{(ii)}Again, any tree $T$ contributing to the summation formula will have exactly one trivalent vertex since it has two leaves. Moreover, if we let $m$ be the number of bivalent vertices, then $h$ and $(\overline\partial - \partial)$ appear $m$ times in $T$. 

Since $p(\overline\partial - \partial) =0$, the last vertex (the one connected to the root) must have valency $3$. We have 
\[
p(m(h(-), -)) =0,
\]
 as, by Lemma~\ref{lem: properties of degree}, $h(b) \in \cB_{>0, \beta}$ for any $b$, and $m(\cB_{>0, \beta}, \cB) \subseteq \cB_{>0, \beta}$.
It follows that the last vertex must be connected to the first leaf.  Thus,
\[
\mu^2_T(a_i,a_j) = p(m(i(a_i), P(a_j)),
\]
where $P$ is the remaining part of the operator which is attached to the ribbon tree with the edges connecting the leaf and the root to the last vertex removed and $h$ replacing $p$. Now, as we saw in part \emph{(i)}, without any loss of generality, we can assume that $i(a_i)=(1,r_i,v_i)$. Moreover, as before, the only part of $i(a_j)$ contributing to $m$ is the $\beta$-degree 1 component. Since the $\beta$-degree of $h$ is $\geq 1$ it follows that $m$ has to be 0 or 1. If $m=0$ then the tree that we obtain is the one we had in \emph{(i)}. If $m=1$ we obtain the tree with one trivalent vertex and one bivalent one connected to the second leaf. (c.f. Figure \ref{fig:ribbonstreees2})

To compute the contribution of the latter tree we first note that only the $\beta$-degree 1 part of the output of $P$ will contribute to the operator induced by $T$. Now, the operator $P$ is just $h(\overline\partial-\partial)i$ so, since $h$ already has $\beta$-degree $>0$ we can assume, with loss of generality, that $i(a_j)=(1,r_j,v_j)$ and that only the $\beta$-degree $0$ part of $(\overline\partial - \partial)$ will contribute to the operator. Last, but not least, we also see that only the $\beta$-degree 1 term in $h$ will contribute to the sum.

We now compute $P(a_j)$. We have 
\begin{align} \nonumber
   P(a_j) & = h((\overline\partial-\partial)(i(a_j))) = h(\sum_k(1,\frac{\partial w}{\partial x_k}r_j,i_{d_{x_k}}v_j)  \\ & = h(1,\frac{\partial w}{\partial x_j}r_j,1)=\frac12\sum_k(dx_k,\frac{\partial^2w}{\partial x_k\partial x_j}r_j,1). 
\end{align} 
  Therefore, the total contribution of $T$ is, 
  $$\mu^2_T(a_i,a_j) = p(m((1,r_i,v_i),\frac12\sum_k(dx_k,\frac{\partial^2w}{\partial x_k\partial x_j}r_j,1)=p(\frac12r_ir_j\frac{\partial^2w}{\partial x_i\partial x_j})=\frac12r_ir_j\frac{\partial^2w}{\partial x_i\partial x_j}.$$

Thus, we have proved that $\mu^2(a_i,a_j)=(r_ir_j,v_i\wedge v_j)+(\frac12r_ir_j)\frac{\partial^2w}{\partial x_i\partial x_j}$ and this is a Clifford multiplication on $\cA$. Similarly, we have that $\mu^2(a_j,a_i)=(r_ir_j,v_j\wedge v_i)+(\frac12r_ir_j)\frac{\partial^2w}{\partial x_i\partial x_j}$ and therefore $\mu^2(a_i,a_j)+\mu^2(a_j,a_i)=r_ir_j\frac{\partial^2w}{\partial x_i\partial x_j}$ which gives the Clifford algebra structure on $\cA$. 

The global calculation follows directly from this local version.
\end{proof}

The following proposition follows a similar argument to those in \cite{Se}:

\begin{proposition-section} \label{prop: initially trivial}
The $A_\infty$-structure on $\cA$ coming from the tree summation formula agrees with the trivial $A_\infty$-structure up to order $d-1$.
\end{proposition-section}
\begin{proof}
We want to show that any tree with $k$ leaves, for $k < d$, contributes a trivial operator to the summation and therefore $\mu^k =0$ for $k < d$.
    
Consider now the $A_\infty$ product, $\mu^k$ for $k>2$. Note that for $k > 2$, any tree with no bivalent vertex does not contribute to the summation as the term $h(i(a)i(a'))=h(i(aa'))=0$ necessarily appears as the output of a trivalent vertex and thus the operator induced by such a tree would be 0. Therefore, the number $m$ of bivalent vertices, is at least $1$. 

 By Lemma~\ref{lem: properties of degree}, $h$ has $f$-degree $-1$, while $\overline\partial-\partial$ has $f$-degree $d-1$.  Since $T$ has $k$-leaves, it has exactly $k-1$ trivalent vertices.  Therefore $h$ appears in the operator at most $k-2+m$ times while $\overline\partial-\partial$ appears $m$-times.  Since $m$ preserves the $f$-degree, it follows that before applying $p$, the operator will have $f$-degree $m(d-1) - (k-2+m)$.  Therefore if
 \[
 m(d-1) - (k-2+m) >0
 \]
 then the operator vanishes.
 
In summary, in order to get a non-trivial contribution from trees with $m$ bivalent vertices one must have
\begin{equation} \label{eq: tree inequality}
1 \leq m \leq \frac{k-2}{d-2}.
\end{equation}
Thus, $\mu^k$ can be non-trivial only when $d\leq k$.
\end{proof}

\begin{lemma-section} \label{lem: 1 bivertex}
Any tree providing a nontrivial contribution to $\mu^d$ has exactly $1$ bivalent vertex.
\end{lemma-section}
\begin{proof}
Equation~\eqref{eq: tree inequality} for $k=d$ tells us that the number of bivalent vertices must be $1$.
\end{proof}

\begin{figure}[h]\vspace{0.2in}\includegraphics[width=2in]{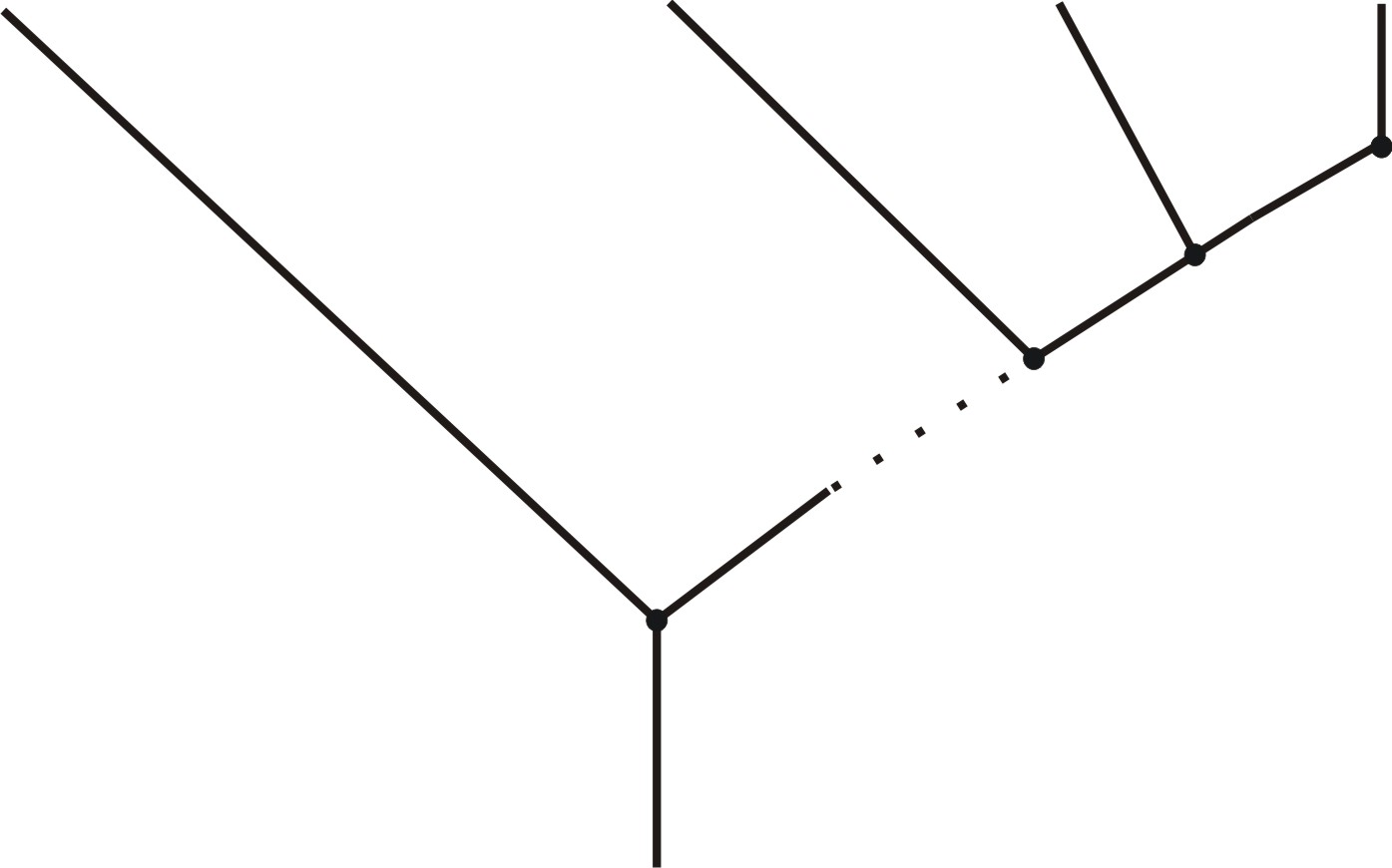}
\captionof{figure}{Solitary Tree Contribution for $\mu^d$}\label{fig: tree}
\end{figure}

The following proposition completes the proof of Theorem \ref{thm:ainfinitymain}:

\begin{proposition-section}\label{prop:higherproducts}
 For $d\geq 3$, the $d$-th higher multiplication on $\cA$ can be described locally by
 $$\mu^d(r_1 \otimes v_{i_1},\ldots,r_d\otimes v_{i_d})=\frac{1}{d!}\frac{\partial^d w}{\partial x_{i_1}\ldots\partial x_{i_d}}r_1\ldots r_d,$$ 
 where $v_{i_j}$ are not necessarily distinct elements of the basis $\left\{ v_{i}=\frac{\partial}{\partial x_i} \right\}$; and globally by
 $$ \mu^d(1\otimes s_1, \dots , 1\otimes s_d) = \frac{1}{d!} (\op{d}(\dots(\op{d}(\op{d}w \, \lrcorner \, s_1)\,\lrcorner\,s_2)\dots)\,\lrcorner\,s_d).$$
\end{proposition-section}

\begin{proof}
We consider $a_j=(r_j,v_{i_j})$ for $j=1,\ldots,d$ and  $v_{i_j}=\frac{\partial}{\partial x_{i_j}}$. Before calculating $\mu^d(a_1,\ldots,a_d)$ we first note that there is only one tree contributing to the summation formula (Figure \ref{fig: tree}).

Indeed, let $T$ be any tree that contributes to the summation formula for $\mu^d(a_1,\ldots,a_d)$. By Lemma~\ref{lem: 1 bivertex}, $T$ has only $1$ bivalent vertex.  On the other hand, since $T$ has $d$ leaves, it has exactly $d-1$ trivalent vertices. Moreover, the bivalent vertex has to be connected to one of the last two leaves since otherwise there is at least one $h$ appearing before $\overline\partial - \partial$ and that means the $f$-degree of the output of the operator given by T (before applying $p$) has $f$-degree greater than or equal to 1 and thus it lies in the kernel of p.

Therefore, the vertex connected to the root has valency $3$.  Furthermore, arguing as in Lemma~\ref{lem: multiplicative structure} it follows that this last vertex is connected to a leaf.  Thus,
\[
\mu^d_T(a_1, \ldots, a_d) = p(m(i(a_1), P(a_2, \ldots, a_d)),
\]
where $P$ is the remaining part of the operator which, as before, is attached to the ribbon tree with the edges connecting the leaf and the root to the last vertex removed and $h$ replacing $p$.  In summary, we have established that the last/left-most vertex appears as in Figure~\ref{fig: tree}. Moreover, we note that, without any loss of generality, we can assume $i(a_1)=(1,r_1,x_{i_1})$. This follows from the same argument as above, when we calculated the multiplication on $\cA$. This also forces $P(a_2, \ldots, a_d)$ to have $\beta$-degree 1 (or, more precisely, all other terms in $P(a_2, \ldots, a_d)$ will vanish when multiplied with $i(a_1)$).

We now argue that the one bivalent vertex cannot be connected to the penultimate leaf since, in this case, the operator $P$ would have $\beta$-degree $\geq 2$ and thus the operator induced by T would be 0. This follows inductively by tracing the $\beta$-degree of the operator induced by the tree we are considering. We thus conclude that there is only one tree contributing to the summation formula giving $\mu^d$ and we can calculate its contribution using a similar calculation as in Lemma~\ref{lem: multiplicative structure} which yields the desired result.

The formula for the global version follows directly from this local version.  
\end{proof}

\subsection{The derived category and a technical assumption} Recall that $\cA$ and $A_\infty$ $\cA$-modules are assumed to have strict restriction morphisms.  We start with the following definition:
\begin{definition}
  For a sheaf of graded $A_\infty$ algebras $\cA$, let $\dpemodinftyZ{\cA}$ be the smallest thick triangulated subcategory of the derived category of strictly unital graded $A_\infty$ $\cA$-modules containing all modules of the form $\cA\otimes_{\cO_S}\cL$ for graded invertible sheaves $\cL$ on $S$. If we wish to emphasize the underlying variety, we will also denote this category by $\dpemodinftyZ{(S,\cA)}$.
\end{definition}

Recall that for a sheaf of graded dg $\cO_S$-algebras $\cB$, $ \dpemodz{\cB}$ is the full subcategory of the derived category of dg $\cB$-modules, classically generated by objects of the form $\mathcal B_w\otimes_{\cO_S}\cL$. 

The following conjecture is believed to be true, however a proof is beyond the scope of this paper.
\begin{conjecture}\label{conditionstar}
	If a sheaf $\cB$ of $\Z$-graded differential-graded $\cO_S$-algebras is graded-quasi-isomorphic to a sheaf $\cA$ of $\Z$-graded $A_\infty$ $\cO_S$-algebras, then there is a $\P(S)$-linear equivalence
\begin{equation*}
	 \dpemodinftyZ{\cA} \iso \dpemodz{\cB}.
\end{equation*}
\end{conjecture}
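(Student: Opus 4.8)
The plan is to establish the conjectured equivalence by passing through a common intermediate category and tracking the triangulated structure carefully. The key observation is that quasi-isomorphic coefficient algebras should induce equivalent derived categories of modules, but for \emph{sheaves} of $A_\infty$-algebras over $S$ with the linearity over $\P(S)$ one must be careful because the restriction-to-affine-opens functors and gluing data must be compatible with the $A_\infty$-structure. First I would reduce to an affine-local statement: cover $S$ by a finite collection of $\P(S)$-equivariant affine opens $\{U_\alpha\}$ on which $\cE$, $\cU$ and the relevant invertible sheaves trivialize, and on each $U_\alpha$ invoke the standard (non-sheafy) theorem that a strict $A_\infty$-quasi-isomorphism $\cA|_{U_\alpha} \to \cB|_{U_\alpha}$ of (graded) dg/$A_\infty$-algebras induces an equivalence $\op{D}(\op{Mod}\, \cA|_{U_\alpha}) \simeq \op{D}(\op{Mod}\, \cB|_{U_\alpha})$ of the perfect subcategories (this is the combination of Keller's and Lef\`evre-Hasegawa's results on derived categories of $A_\infty$-algebras); the $A_\infty$-quasi-isomorphism needed is provided by Proposition~\ref{proposition: minimal model}.

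Second, I would globalize. The induction functor $- \otimes^{\mathbf{L}}_{\cA}\cB$ (interpreted as a bar-construction/twisted-complex functor over each affine) is compatible with restriction because we have assumed all restriction maps are strict $A_\infty$-morphisms, so it glues to a functor $\Phi: \dpemodinftyZ{\cA} \to \dpemodz{\cB}$ sending $\cA\otimes_{\cO_S}\cL$ to $\cB_w\otimes_{\cO_S}\cL$. Since both categories are, by definition, the smallest thick subcategories generated by these twists of the free module, $\Phi$ is essentially surjective onto a dense subcategory, hence (after idempotent completion, which is automatic here since both sit inside full module categories that are idempotent complete) essentially surjective. For full faithfulness, it suffices to check that $\Phi$ induces isomorphisms on $\op{Hom}$-complexes between the generators $\cA\otimes_{\cO_S}\cL$ and $\cA\otimes_{\cO_S}\cL'$; these are computed by $\mathbf{R}\Gamma$ of a sheaf-Hom complex, and affine-locally the $A_\infty$-quasi-isomorphism gives the desired quasi-isomorphism of local Hom-complexes, which then assemble via a hypercohomology spectral sequence argument. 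The $\P(S)$-linearity is preserved throughout because all the functors in sight are $\cO_S$-linear and commute with $-\otimes_{\cO_S}\cL$.

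The third ingredient is to make sense of the derived category of sheaves of $A_\infty$-modules itself: one needs a model (say, strictly unital $A_\infty$-modules with morphisms forming a dg-category, localized at quasi-isomorphisms, or equivalently Positselski-style coderived/absolute-derived constructions in the sheafy setting) for which the above affine-local-to-global machinery — in particular a \v{C}ech/descent spectral sequence for $\mathbf{R}\Gamma$ of Hom-complexes and the gluing of the induction functor — actually works. This is precisely where the subtlety lies, because sheaves of $A_\infty$-algebras do not form a nice enough category for naive descent, and one must either restrict to a good class of modules (e.g. those with locally-free underlying graded sheaves, or h-injective/h-flat resolutions compatible with restriction) or develop the comparison at the level of dg-categories of twisted complexes. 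The main obstacle I expect is exactly this: verifying that the derived category of sheaves of $A_\infty$-modules as \emph{defined} in the paper (with strict restriction maps) satisfies enough descent that the affine-local equivalences glue to a global one without losing or gaining objects. This is why the authors leave it as Conjecture~\ref{conditionstar} rather than a theorem; a complete proof would require setting up a robust theory of sheaves of $A_\infty$-categories and their modules, which is genuinely beyond a routine argument.
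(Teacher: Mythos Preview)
The statement you are addressing is a \emph{conjecture} in the paper; there is no proof to compare against. The paper only offers a remark: the case $S=\op{Spec}k$ is known by Lef\`evre-Hasegawa, where the category of $A_\infty$-modules carries a model structure in which every object is fibrant and cofibrant, so derived functors are straightforward; for general $S$ the authors say one would need to define the appropriate model structures on sheaves of $A_\infty$-modules and then reproduce the comparison diagram of Lemma~2.4.2.3 in \cite{lefevre-hasegawa}. Your write-up is honest about being an outline rather than a proof, and you correctly locate the obstruction.

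Your strategy, however, is genuinely different from the one the authors gesture toward. They envision a global model-categorical framework in which the $A_\infty$-quasi-isomorphism $\cA\to\cB$ directly induces an equivalence of homotopy categories of modules, no descent required. You instead reduce to affines (where the non-sheafy Keller/Lef\`evre-Hasegawa theorem applies) and then attempt to glue via a \v{C}ech argument. Your route has the virtue of invoking only the known affine statement, but the descent step you flag is a real gap, not a formality: strictness of restriction maps does not automatically give you that local equivalences of perfect $A_\infty$-module categories assemble into a global one, nor that the bar-construction induction functor $-\otimes^{\mathbf{L}}_{\cA}\cB$ is well-defined and sheafifies correctly. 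The authors' model-structure route avoids descent but must instead build the model structure on a category (sheaves of strictly unital $A_\infty$-modules) for which this has not been done. Either way there is substantive foundational work to do, which is exactly why the paper records this as a conjecture rather than a theorem.
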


\begin{remark}
  When $S=\op{Spec}k$, this conjecture is known by the results in \cite{lefevre-hasegawa}, Section 2.4.2; in which case, the category of $A_\infty$-modules has a model structure where the weak equivalences are quasi-isomorphisms and every object is fibrant and cofibrant. This makes considerations of derived functors more straightforward.  
For the general case, one would need to define the appropriate model structures and prove the derived-equivalences induced by the diagram in the proof of Lemma 2.4.2.3. in \emph{loc. cit.}.
\end{remark}

In the notation of Section \ref{sec:ainfinitymain}, we have the following:
\begin{corollary} \label{corollary: replacement by minimal model}
  Assuming Conjecture \ref{conditionstar}, the quasi-isomorphism, $f$, induces an equivalence,
 \begin{align*}
   \dpemodinftyZ{\mathcal A} & \cong \dpemodz{\mathcal B_w}.
 \end{align*}
 In particular, 
 \begin{displaymath}
   \dcoh{[Q_-/\GG^2],w} \cong \dpemodinftyZ{\mathcal A}.
 \end{displaymath}
\end{corollary}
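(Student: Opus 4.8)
The plan is to concatenate three facts already in hand: the $A_\infty$-quasi-isomorphism $f\colon\cA\to\cB_w$ of Proposition~\ref{proposition: minimal model} (which is the first assertion of Theorem~\ref{thm:ainfinitymain}), Conjecture~\ref{conditionstar}, and the Morita-type equivalence $\dcoh{[Q_-/\mathbb{G}_m^2],w}\iso\dpemodz{\cB_w}$ of Proposition~\ref{prop:generatoroutcome}. First I would check that the hypotheses of Conjecture~\ref{conditionstar} hold with $\cB=\cB_w$: the sheaf $\cB_w$ is a genuine sheaf of $\Z$-graded differential-graded $\cO_S$-algebras with respect to its internal grading, the cohomological grading playing the role of homological degree; and, because the maps $i,p,h$ of Section~\ref{sec:ainfinitymain} and hence the transferred data respect this internal grading, $f$ is a grading-preserving $A_\infty$-quasi-isomorphism of sheaves of algebras. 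Thus $\cB_w$ and $\cA$ are graded-quasi-isomorphic in exactly the sense required.

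Granting Conjecture~\ref{conditionstar}, we obtain a $\P(S)$-linear equivalence $\dpemodinftyZ{\cA}\iso\dpemodz{\cB_w}$, and I would remark that it is the one induced by $f$. Indeed, restriction of scalars along the $A_\infty$-morphism $f$ sends the free module $\cB_w$ to an $\cA$-module quasi-isomorphic to $\cA$, hence sends $\cB_w\otimes_{\cO_S}\cL$ to an object quasi-isomorphic to $\cA\otimes_{\cO_S}\cL$ for every graded invertible sheaf $\cL$ on $S$; so it carries the distinguished generating set of $\dpemodz{\cB_w}$ to that of $\dpemodinftyZ{\cA}$ and restricts to an equivalence of the thick subcategories they generate, with quasi-inverse the corresponding extension-of-scalars functor. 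Over $\op{Spec}k$ this is precisely the content of Lemma~2.4.2.3 of \cite{lefevre-hasegawa} recalled in the remark after Conjecture~\ref{conditionstar}.

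Composing with the equivalence of Proposition~\ref{prop:generatoroutcome} then gives
\[
\dcoh{[Q_-/\mathbb{G}_m^2],w}\iso\dpemodz{\cB_w}\iso\dpemodinftyZ{\cA},
\]
which is the ``in particular'' clause. The entire non-formal content of the corollary is hidden inside Conjecture~\ref{conditionstar}: one must set up, over the base $S$, a homotopical (model-category) framework for sheaves of strictly unital graded $A_\infty$ $\cA$-modules in which the restriction and extension functors along $f$ descend to the derived level and become mutually inverse on perfect objects, the real difficulty being to globalize over $S$ the well-understood statement over a point. This is exactly why the result must be stated conditionally; granting that input, everything else is bookkeeping, the only point meriting care being that the internal $\Z$-grading along which $f$ and the generators $\cB_w\otimes_{\cO_S}\cL$ and $\cA\otimes_{\cO_S}\cL$ are homogeneous is the grading used to define the two module categories, which is immediate from the constructions of Section~\ref{sec:ainfinitymain}.
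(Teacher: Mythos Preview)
Your proposal is correct and follows essentially the same route as the paper: the paper's proof simply cites Theorem~\ref{thm:ainfinitymain} together with Conjecture~\ref{conditionstar} for the first equivalence, and Propositions~\ref{prop: ainftyequiv} and~\ref{proposition: essentially surject onto perfects} (which are precisely the ingredients of Proposition~\ref{prop:generatoroutcome}) for the second. Your additional remarks---verifying that $f$ respects the internal grading so that the hypotheses of the conjecture are met, and describing the equivalence as restriction/extension of scalars along $f$---are helpful elaborations but do not alter the logical structure, which matches the paper's one-line argument.
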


\begin{proof}
  The first statement follows from Theorem \ref{thm:ainfinitymain} and Conjecture \ref{conditionstar}. The second is then an immediate consequence of Propositions~\ref{prop: ainftyequiv} and \ref{proposition: essentially surject onto perfects}.
\end{proof}

\begin{remark}
Assuming Conjecture \ref{conditionstar}, the results of Section \ref{sec:morita} and Corollary \ref{corollary: replacement by minimal model} could have alternatively been obtained (and generalized) by proving a relative version of Baranovsky's BGG correspondence for projective complete intersections \cite{Bar05}. 
\end{remark}

\section{Homological Projective Duality for d-th degree Veronese embeddings}

We will now apply the results of the previous sections to construct a homological projective dual to the degree $d$ Veronese embedding. In view of potential applications, we will do this in the relative setting. Then, if $d=2$, we will recover Kuznetsov's construction for degree two Veronese embeddings \cite{Kuz05} (when $S$ is a point) and the relative version in \cite{ABB}.

Let $S$ be a smooth, connected variety and $\mathcal P$ be a locally-free coherent sheaf on $S$. We consider the relative degree $d$ Veronese embedding for $d > 0$, 
  \begin{displaymath}
   g_d: \P_S(\mathcal P) \to \P_S(S^d\mathcal P).
  \end{displaymath}

Notice that $g_d^*(\cO_{\P({S^d\mathcal P})}(1))\cong \cO_{\P({\mathcal P})}(d)$. Consider the Lefschetz decomposition 
$$\dbcoh{\P_S(\mathcal P)}=\langle\cA_0,\ldots,\cA_i(i)\rangle$$
where the subcategories $\cA_j$ are defined to be
\begin{align*}
\cA_0=\ldots =\cA_{i-1}=\langle p^*\dbcoh{S},\ldots,p^*\dbcoh{S}(d-1)\rangle \\
\cA_i=\langle p^*\dbcoh{S},\ldots,p^*\dbcoh{S}(k-1)\rangle
\end{align*} 
where $k=\text{rk }\mathcal P-d(\lceil\frac{\text{rk }\mathcal P}{d} \rceil-1)$. In \cite[Section 4]{BDFIKhpd} we recover this Lefschetz decomposition via VGIT methods.
The universal degree $d$ polynomial $w$ is given by
\begin{displaymath}
   w := (g_d \times 1)^* \theta \in \Gamma(\P_S(\mathcal P)) \times_S \P_S(S^d\mathcal P^*), \mathcal O_{\P_S(\mathcal P)}(d) \boxtimes \mathcal O_{\P_S(S^d\mathcal P^*)}(1)),
\end{displaymath}
where $\theta$ is the tautological section in $\Gamma( \P_S(S^d\mathcal P) \times_S \P(S^d \mathcal P^*),\mathcal O_{\P(S^d\cP)}(1) \boxtimes \mathcal O_{\P_S(S^d\mathcal P^*)}(1))$. 
The zero locus $w$ in $\P_S(\mathcal P) \times_S \P_S(S^d\mathcal P^*)$ is the universal hyperplane section $\cX$ of $\P_S(\mathcal P)$ with respect to the embedding $g_d$.

The main result of \cite{BDFIKhpd} is the construction of a Landau-Ginzburg model which is a homological projective dual.

\begin{theorem-section}\label{prop:veroneselghpd}
  The gauged Landau-Ginzburg model $([\op{V}_S(\mathcal P)\times_S\P_S(S^d\mathcal P^*)/{\gm}],w)$ is a weak homological projective dual to $\P_S(\mathcal P)$ with respect to the embedding $g_d$ and the Lefschetz decomposition constructed above.

Moreover, we have:
\begin{itemize}
\item The derived category of the Landau-Ginzburg model $([\op{V}_S(\mathcal P)\times_S\P_S(S^d\mathcal P^*)/{\gm}],w)$ admits a dual Lefschetz collection
  \[ \dcoh{[\op{V}_S(\mathcal P)\times_S\P_S(S^d\mathcal P^*)/{\gm}],w} = \langle\mathcal B_{j}(-j),\ldots,\mathcal B_1(-1),\mathcal B_0\rangle \]

\item Let $\cV = S^d\mathcal P^*/\cU$ be a quotient bundle and $\cW=(S^d\mathcal P)/{\cU^\perp}$. Assume that $\P_S(\mathcal P) \times_{\P_S(S^d\mathcal P)} \P_S(\cW)$ is a complete linear section (not necessarily smooth). Then, there exist semi-orthogonal decompositions,
\[
\dbcoh{\P_S(\mathcal P) \times_{\P_S(S^d\mathcal P)} \P_S(\cW)}=\langle\mathcal C_{\cV},\mathcal A_r(1),\ldots,\mathcal A_i(i-r+1)\rangle,
\]
 and,
 \[
   \dcoh{[\op{V}_S(\mathcal P)\times_S\P_S(\cV)/{\gm}],w } =\langle\mathcal B_{0}(-r-N -2),\ldots,\mathcal B_{N -1-r}(-1),\mathcal C_{\cV}\rangle.
 \]
 \end{itemize}

\end{theorem-section}
\begin{proof}
  This is \cite[Theorem 4.1]{BDFIKhpd}.
\end{proof}
\begin{remark-section}
  For the first part of the theorem, we can alternatively consider $\cX$ as a degree $d$ hypersurface fibration over $\P(S^d\cP^*)$ and use our relative version of Orlov's theorem (Corollary \ref{proposition: VGIT for degree d fibrations}) with $S=\P_S(S^d\mathcal P^*)$, $\mathcal E=\pi^*\mathcal P$ and $\mathcal U=\O_{\P_S(S^d\mathcal P^*)}(1)$ to get the decomposition 
\begin{align*}
& \dbcoh{\mathcal X}= \\
& \,\,\,\,\, \langle\dcoh{[V_{\P_S(S^d\mathcal P^*)}(\pi^*\mathcal P)/\gm],w}, \mathcal A_1(1)\otimes\dbcoh{\P_S(S^d\mathcal P^*)},\ldots, \mathcal A_i(i)\otimes\dbcoh{\P_S(S^d\mathcal P^*)} \rangle.
\end{align*}
Observing that $V_{\P_S(S^d\mathcal P^*)}(\pi^*\mathcal P) \iso \op{V}_S(\mathcal P)\times_S\P_S(S^d\mathcal P^*)$, we obtain the required semi-orthogonal decomposition. 
\end{remark-section}
\
Using this result, we can state the following:
\begin{theorem-section}\label{thm:relveronese} 
 Let $S$ be a smooth, connected variety and $\mathcal P$ be a locally-free coherent sheaf over $S$. Let $d\geq3$ and $d\leq \op{rank}\cP$. Assuming Conjecture \ref{conditionstar}, there exists a sheaf of minimal $A_\infty$-algebras $(\mathcal A, \mu)$ on $\P_S(S^d\mathcal P^*)$ with

 $$\mathcal A=\left(\bigoplus_{k\in \Z} u^k\cO_{\P_S(S^d\mathcal P^*)}(k) \right)\otimes\Lambda^{\bullet}\mathcal P^*,$$
and
\begin{itemize}
	\item If $d>2$: $\mu^i = 0$ for $2<i<d$ and, in local coordinates,
  $$\mu^d(1\otimes v_{i_1},\ldots,1\otimes v_{i_d})=\frac{1}{d!}\frac{\partial^d w}{\partial x_{i_1}\ldots\partial x_{i_d}}.$$
  	\item If $d=2$: $\mu^i = 0$ for $i>2$ and $(\cA,\mu^2)$ is a sheaf of Clifford algebras with Clifford relations given, in local coordinates, by
$$ \mu^2(1\otimes v_i,1\otimes v_j) + \mu^2(1\otimes v_j, 1\otimes v_i) = \frac{\partial^2w}{\partial x_i\partial x_j} $$
\end{itemize}
 such that the non-commutative variety 
$$(\P_S(S^d\mathcal P^*),\mathcal A)$$ 
is a weak homological projective dual to $\P_S(\mathcal P)$ with respect to the embedding $g_d$ and the Lefschetz decomposition constructed above.
\end{theorem-section}

\begin{theorem-section}\label{thm:relveroneseapps}
  With the same assumptions as in Theorem \ref{thm:relveronese} above, we have the following
  \begin{enumerate}
   \item[(i)] The perfect derived category of the non-commutative variety $(\P_S(S^d\mathcal P^*),\mathcal A)$ admits a dual Lefschetz collection
     \[ \dpemodinftyZ{(\P_S(S^d\mathcal P^*),\mathcal A)} = \langle\mathcal B_{j}(-j),\ldots,\mathcal B_1(-1),\mathcal B_0\rangle \]
   \item[(ii)]  Let $\cV = S^d\mathcal P^*/\cU$ be a quotient bundle with $\op{rank}\cU=r$ and $\cW=(S^d\mathcal P)/{\cU^\perp}$. Assume that $\P_S(\mathcal P) \times_{\P_S(S^d\mathcal P)} \P_S(\cW)$ is a smooth, complete linear section, i.e.
\[
\op{dim}(\P_S(\mathcal P) \times_{\P_S(S^d\mathcal P)} \P_S(\cW)) = \op{dim}(\P_S(\mathcal P))-r.
\]
Then, there exist semi-orthogonal decompositions:
 \[
   \dbcoh{\P_S(\mathcal P) \times_{\P_S(S^d\mathcal P)} \P_S(\cW)}=\langle\dpemodinftyZ{(\P_S(\cV),\mathcal A|_{\P_S(\cV)})},\mathcal A_r(1),\ldots,\mathcal A_i(i-r+1)\rangle,
\] if $r\leq \frac{\op{rank}\mathcal E}{d}$
or
\[
   \dpemodinftyZ{(\P_S(\cV),\mathcal A|_{\P_S(\cV)})}=\langle\mathcal B_j(j),\ldots,\mathcal B_k(-k),\dbcoh{\P_S(\mathcal P) \times_{\P_S(S^d\mathcal P)} \P_S(\cW)}\rangle,
 \] where $k=\op{rank}\mathcal E-r-1$, if $r\geq \frac{\op{rank}\mathcal E}{d}$
  \end{enumerate}
\end{theorem-section}

\textit{Proof of Theorem \ref{thm:relveronese}} :
In the setup of Section \ref{sec:morita}, we take  $S=\P_S(\cV)$, $\cE = (\pi^*\cP)|_{\cV}$, $\cU = \cO_{\P_S(\cV)}(1)$ and the same $w$. Applying Proposition \ref{prop:generatoroutcome} gives the equivalence between the triangulated category $\dcoh{[\op{V}_S(\mathcal P)\times_S\P_S(\cV)/{\gm}],w}$ and $\dpemodinftyZ{(\P_S(\cV),\mathcal A|_{\P_S(\cV)})}$, for any quotient bundle $\cV$ of $ S^d\cP^*$. It is easy to see that this is $\P(S^d\cP^*)$-linear. Theorem \ref{thm:ainfinitymain} and the first part of \ref{prop:veroneselghpd} allows us to complete the proof.

\textit{Proof of Theorem \ref{thm:relveroneseapps}}:
This is the second part of Theorem \ref{prop:veroneselghpd} applied to the homological projective dual obtained in Theorem \ref{thm:relveronese}.

\begin{remark-section}
  As an intermediate result, the conclusions of Theorem \ref{thm:relveronese} and Theorem \ref{thm:relveroneseapps} hold when the non-commutative variety $(\P_S(S^d\mathcal P^*),\mathcal A)$ is replaced by $(\P_S(S^d\mathcal P^*), \cB_w)$ without the need for Conjecture \ref{conditionstar}. 
\end{remark-section}

\begin{corollary-section}
For any linear subspace $L\subset S^dV^*$ such that the corresponding intersection of degree $d$ hypersurfaces in $\P(V)$ is smooth and complete (i.e. it has dimension $\op{dim}(\P(V))-\op{dim}(L)$), we have
\begin{itemize}
  \item If $\op{dim}(L)\leq n/d$, then there exists a semi-orthogonal decomposition:
    $$\dbcoh{X_L}=\langle\dpemodinftyZ{(\P(L),\mathcal A|_{L})},\cA_r(r),\ldots,\cA_i(i)\rangle.$$
\item If $\op{dim}(L)\geq n/d$, then
there exists a semi-orthogonal decomposition:
 $$\dpemodinftyZ{(\P(L),\mathcal A|_{L})}=\langle\mathcal B_j(-j),\ldots,\cB_{\op{dim}(V)-\op{dim}(L)-1}(-\op{dim}(V)+\op{dim}(L)+1), \dbcoh{X_L}\rangle$$
\end{itemize}
\label{cor: d=2}
\end{corollary-section}
\begin{proof}
This follows from setting $S$ equal to $\op{Spec } k$ in Theorem~\ref{thm:relveroneseapps}.
\end{proof}

\begin{remark-section}
  If $d=2$ then, as we have noticed in the previous section, $\cA$ is actually a sheaf of Clifford algebras, therefore by \cite{Riche}, we get an equivalence
 \[
   \op{D}_{\op{pe}}(\op{Mod}_{\Z}(\P(S^2V^*),\mathcal \cB_w))\cong \op{D}_{\op{pe}}(\op{Mod}_{\Z}(\P(S^dV^*),\mathcal \cA)),
 \]
without the need for Conjecture \ref{conditionstar}. Now, using Proposition 3.7 in \cite{Kuz05}, one has 
 \[
 \op{D}_{\op{pe}}(\op{Mod}_{\Z}(\P(S^2V^*),\mathcal \cA))\cong \op{D}^{\op{b}}(\op{mod}(\P(S^dV^*),\mathcal B_0)),
 \]
 where $\mathcal B_0$ is the sheaf of even Clifford algebras defined in \cite{Kuz05}. This recovers the homological projective dual in \cite{Kuz05}. Similarly, for a smooth complete intersection of quadrics, we get the same description as in \emph{loc.\ cit.} using Corollary~\ref{cor: d=2}.  The relative versions in \cite{Kuz05} and  \cite{ABB} follow similarly.
\end{remark-section}

\end{document}